\newtheorem{theorem}{Theorem}[section]
\newtheorem{proposition}[theorem]{Proposition}
\newtheorem{lemma}[theorem]{Lemma}
\newtheorem{claim*}{Claim}
\theoremstyle{definition}
\newtheorem{definition}[theorem]{Definition}
\newtheorem{definition*}{Definition}
\numberwithin{equation}{section}
\newtheorem{Figure}[theorem]{Figure}
\newtheorem{Example}[theorem]{Example}
\newtheorem{Question*}{Question} 
\begin{document}

%\doublespacing 
%\tableofcontents 
 
\subjclass[2010]{53D05, 53D17.}
\keywords{ Poisson cohomology, log symplectic, $b$-symplectic}
\title[star log symplectic surfaces]{A classification of star log symplectic structures on a compact oriented surface}

\author{Melinda Lanius}
\address{Department of Mathematics, University of Arizona, 617 North Santa Rita Avenue, Tucson, AZ 85721, USA}
\email{lanius@math.arizona.edu}

\begin{abstract} Given a compact oriented surface, we classify log Poisson bi-vectors whose degeneracy loci are locally modeled by a finite set of lines in the plane intersecting at a point. Further, we compute the Poisson cohomology of such structures and discuss the relationship between our classification and the second Poisson cohomology.  
\end{abstract}
\maketitle
\section{\bf Introduction} 

As is well-known, two non-degenerate Poisson - i.e. symplectic - structures on a compact connected surface $S$ are isomorphic precisely when they have the same de Rham cohomology class. In this work we give an analogous classification of a more general class of Poisson bi-vector on a surface which we call star log symplectic structures. Since every bi-vector on a surface $S$ is Poisson, we organize Poisson structures on $S$ by their degeneracy loci and, in fact, Vladimir Arnol'd introduced a hierarchy for these degeneracies (Appendix 14 C \cite{Arnold}). Olga Radko provided a classification when the Poisson bi-vector degenerated linearly along a curve \cite{Radko}. \emph{We will extend her classification to Poisson bi-vectors whose degeneracy loci are locally modeled by a finite set of lines in the plane intersecting at a point. Additionally, we will compute the Poisson cohomology of such structures $\pi$ and discuss how our classification relates to deformations of $\pi$.  }

\subsection{Classification.} Let $(S,\pi)$ be a Poisson structure on a surface $S$. The key idea to our approach is to not work with $\pi$, but instead to work with the corresponding symplectic form, even when $\pi$ is degenerate. Recall that $\pi$  induces a map $\pi^\sharp$ between $T^\ast S$ and $TS$. When $\pi$ is non-degenerate, it admits an inverse. 

\centerline{$\xygraph{!{<0cm,0cm>;<1cm, 0cm>:<0cm,1cm>::}
!{(1.25,0)}*+{T^\ast S}="b"
!{(4.8,0)}*+{{TS}}="c"
!{(1.75,.1)}*+{{}}="d"
!{(4.25,.1)}*+{{}}="e"
!{(1.75,-.1)}*+{{}}="f"
!{(4.25,-.1)}*+{{}}="g"
"d":"e"^{\pi^{\sharp}}
"g":"f"^{\omega^{\flat}=(\pi^{\sharp})^{-1}}}$}

\noindent This inverse map defines a symplectic form $\omega$ for $S$. Moser's argument (see Theorem 2.70 in \cite{Cannas}), a fundamental technique in symplectic geometry, provides a classification: two symplectic forms $\omega_0$ and $\omega_1$ on a compact connected surface $S$ are symplectomorphic if and only if they are cohomologous in degree 2 de Rham cohomology. The first main result of this paper is a classification similar in spirit to this symplectic case. We will elaborate on the meaning of certain terms in the exposition following the statement.

\begin{theorem}\label{thmclass} (Classification of star log symplectic surfaces). Let $(S,D)$ be a compact connected surface with star divisor $D$. Two log-symplectic forms $\omega_0$ and $\omega_1$ on $(S,D)$ are symplectomorphic if and only if they are cohomologous in the degree 2 Lie algebroid cohomology of the $b$-tangent bundle and $\omega_0$ and $\omega_1$ induce the same orientation of the $b$-tangent bundle. \end{theorem}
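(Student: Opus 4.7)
My plan is to transport the classical Moser argument to the b-geometric setting. The key observation is that although a log-symplectic form $\omega$ is degenerate as a section of $\Lambda^2 T^\ast S$, it is a genuine non-degenerate pairing on the b-tangent bundle ${}^bTS$ associated with the star divisor $D$. Consequently Moser's linear-algebra step (solving $\iota_X \omega = -\eta$) goes through unchanged, provided we replace the de Rham complex by the b-complex throughout and track that all vector fields produced are sections of ${}^bTS$.

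\textbf{Forward direction.} Suppose $\phi : S \to S$ satisfies $\phi^\ast \omega_1 = \omega_0$. Because the degeneracy locus is intrinsic to the form, $\phi$ must carry $D$ to itself and hence lifts to an automorphism of ${}^bTS$, so $\phi^\ast \omega_1 = \omega_0$ is in particular an equality in b-cohomology and the induced orientation of ${}^bTS$ is preserved. To conclude that $[\omega_0] = [\omega_1]$ (and not merely $[\omega_0] = \phi^\ast[\omega_1]$) I would appeal to a Mazzeo-Melrose style decomposition of $H^2_b(S)$: the cohomology class of a log-symplectic form on a compact oriented surface is determined by its global Liouville volume together with the modular residues along the components of $D$, and each of these numerical invariants is a symplectomorphism invariant.

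\textbf{Main direction.} Assume $[\omega_0] = [\omega_1]$ in $H^2_b(S)$ and that both forms induce the same orientation of ${}^bTS$. Set $\omega_t := (1-t)\omega_0 + t\omega_1$ and pick a b-one-form $\eta$ with $d\eta = \omega_1 - \omega_0$. If every $\omega_t$ is non-degenerate as a b-two-form, then the Moser equation $\iota_{X_t}\omega_t = -\eta$ has a unique smooth solution $X_t \in \Gamma({}^bTS)$, which is automatically tangent to every branch of $D$ simply because it is a b-vector field. Compactness of $S$ ensures that the flow of $X_t$ integrates over $t \in [0,1]$ and preserves $D$, and the standard computation $\frac{d}{dt}(\phi_t^\ast \omega_t) = 0$ yields $\phi_1^\ast \omega_1 = \omega_0$.

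\textbf{Main obstacle.} The heart of the argument is proving that $\omega_t$ remains non-degenerate as a b-two-form for every $t$. Away from $D$ and at smooth points of $D$ this reduces, exactly as in Radko's work, to the observation that the b-Pfaffians of $\omega_0$ and $\omega_1$ have the same sign (which is precisely the orientation hypothesis), so their convex combination is nowhere zero. The new ingredient is the star points of $D$, where several branches meet and ${}^bT^\ast S$ has a more intricate local structure. I expect that by choosing local coordinates adapted to the star and expressing each $\omega_i$ in the standard b-basis along every branch, non-degeneracy of $\omega_t$ again reduces to positivity of a single scalar function, so that the orientation hypothesis suffices to guarantee non-degeneracy throughout the homotopy. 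Once this local verification is in hand, the global Moser argument produces the required symplectomorphism.
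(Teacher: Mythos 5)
Your proposal is correct and takes essentially the same route as the paper, which establishes the classification via a Moser path argument in the $b$-complex (the relative and global Moser statements, Propositions \ref{prop01} and \ref{globalmoser}), with the $b$-vector field solving $i_{v_t}\omega_t=-\gamma$ automatically tangent to $D$ and integrable by compactness. The ``main obstacle'' you single out is in fact immediate: the paper defines ``same orientation of the $b$-tangent bundle'' to mean $\omega_0=f\omega_1$ for a positive $f\in\mathcal{C}^\infty(S)$, and since $\wedge^2({}^bT^*S)$ is a line bundle the convex combination $\omega_t=((1-t)f+t)\,\omega_1$ is non-degenerate everywhere, with no separate local analysis at the star points needed.
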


To understand this theorem, let us begin by considering a class of Poisson structures that have some degeneracy. We are not able to immediately work with a corresponding form $\omega$ because the map $\pi^\sharp$ will not have an inverse. The trick is to find a class of Poisson structure where we can replace the tangent bundle with a Lie algebroid \(\mathcal{A}\). We can view bi-vectors $\pi$ in this class as non-degenerate on $\mathcal{A}$ and define an associated symplectic form on $\mathcal{A}^*$. 

\centerline{$\xygraph{!{<0cm,0cm>;<1cm, 0cm>:<0cm,1cm>::}
!{(1.25,0)}*+{\mathcal{A}^\ast}="b"
!{(4.8,0)}*+{{\mathcal{A}}}="c"
!{(1.75,.1)}*+{{}}="d"
!{(4.25,.1)}*+{{}}="e"
!{(1.75,-.1)}*+{{}}="f"
!{(4.25,-.1)}*+{{}}="g"
"d":"e"^{\pi^{\sharp}}
"g":"f"^{\omega^{\flat}=(\pi^{\sharp})^{-1}}}$}

\noindent This isomorphism allows us to use Moser-type arguments and we can hope for a classification similar to the symplectic case. 

In Section \ref{SectionClassification} we will provide a more detailed description of this method, but for now we offer a sampling of relevant recent literature: Victor Guillemin, Eva Miranda, and Ana Rita Pires \cite{Guillemin01} carried out this procedure for a class of structure called $b$-Poisson. Geoffrey Scott proved versions of Moser's theorem in what he named the  $b^k$-setting and used it to establish this characterization for $b^k$-Poisson surfaces (See section 6.1 of \cite{Scott} and in particular Theorem 6.7). In \cite{Lanius}, we use Moser techniques to establish local normal forms for scattering-symplectic structures on manifolds of any even dimension. Eva Miranda and Arnau Planas \cite{Miranda} further use these Moser techniques to establish a version of Scott's classification that is equivariant under the action of a group. 

We will classify a new type of Poisson structure - which we call star log symplectic - on a surface. Work has been done in the setting where Poisson bi-vectors degenerate linearly on a normal crossing divisor $D$, i.e. a set of smooth hypersurfaces $Z\subset M$ that intersect transversely, see \cite{Gualtieri02, Guillemin01, LaniusPartitionable} or \cite{Radko02}. Given a surface $S$, we will examine Poisson bi-vectors that degenerate linearly on a set $D$ of smooth hypersurfaces $Z\subset S$ whose intersections are modeled by a finite set of lines in the plane intersecting at a point. Such a set $D$ is called a \emph{star divisor}.

To define the $b$-tangent bundle, the Lie algebroid we will use, is subtle and requires more work than in the case of  single, or two intersecting, lines. We address the challenge of defining the $b$-tangent bundle in this more general setting by introducing an adapted atlas, which we call an \emph{astral atlas}. The complete details of this construction are provided in Section  \ref{secat}. 

\begin{Figure}\label{fig1} Some local models \vspace{1ex}

\begin{center}{\bf star log  symplectic }

\hspace{-.75ex}$\overbrace{\hspace{76ex}}_{}$\end{center}\vspace{-1ex}

\begin{minipage}{.25\linewidth} {\bf  $b$ - symplectic }

\hspace{-2ex}$\overbrace{\hspace{18ex}}_{}$\end{minipage}\vspace{-1ex}

\noindent\begin{minipage}{.25\linewidth}\begin{center}

$\pi=x\dfrac{\partial}{\partial x}\wedge\dfrac{\partial}{\partial y}$\vspace{2ex}

\emph{linear degeneracy}\vspace{1.5ex}

\includegraphics[scale=.9]{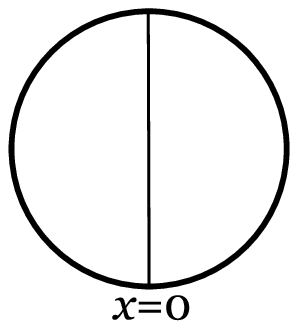}

 \end{center} 
\end{minipage}\hspace{2ex}\begin{minipage}{.33\linewidth} \begin{center}

$\pi=\lambda(x^2-y^2)\dfrac{\partial}{\partial x}\wedge\dfrac{\partial}{\partial y}$ \vspace{2ex}

\emph{quadratic degeneracy}  \vspace{1ex}

\includegraphics[scale=.9]{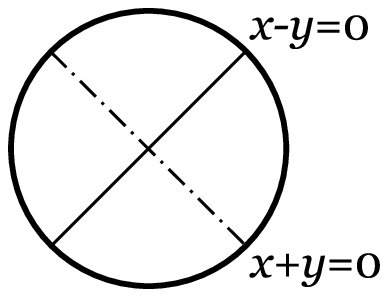}

\end{center}\end{minipage}\hspace{2ex}\begin{minipage}{.33\linewidth} \begin{center}

$\pi = \lambda(x^2y-y^3)\dfrac{\partial}{\partial x}\wedge\dfrac{\partial}{\partial y}$\vspace{2ex}

\emph{cubic degeneracy} \vspace{1ex}

\includegraphics[scale=.9]{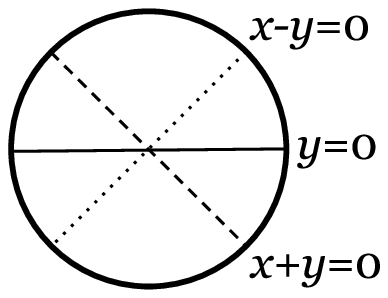}
\end{center}\end{minipage} \vspace{1ex}

 \begin{minipage}{.6\linewidth}\begin{center} \hspace{-5ex}$\underbrace{\hspace{44ex}}_{}$
 
 \hspace{-1.5ex}{\bf   normal crossing }
 \end{center}\end{minipage} 
\end{Figure}\vspace{1ex}

In Section \ref{SectionClassification} we classify star log symplectic surfaces up to $b$-symplectomorphism. For simplicity, we state our results here using a fixed tubular neighborhood of $D$. A more detailed and general discussion can be found in Section \ref{SectionClassification}. 

We realize star log symplectic Poisson bi-vectors as non-degenerate on Richard Melrose's $b$-tangent bundle \cite{MelroseGreenBook}. We will show that, given a surface with star divisor $(S,D=\left\{Z_1,\dots,Z_m\right\})$, the second Lie algebroid cohomology of the $b$-tangent bundle is $${}^bH^2(S)\simeq H^2(S)\oplus \bigoplus_{i}H^{1}(Z_i)\oplus\bigoplus_{i<j}H^{0}(Z_i\cap Z_j).$$ Note that this result is quite surprising since the Lie algebroid cohomology is only noticing pairwise intersection of curves. This is the cohomology featured in the classification Theorem \ref{thmclass}. Because the Lie algebroid cohomology only `sees' pair-wise intersection, we also require that two $b$-symplectic forms $\omega_0,\omega_1$ induce the same orientation of the $b$-tangent bundle, i.e. $\omega_0=f \omega_1$ for a positive function $f\in\mathcal{C}^\infty(S)$.

\subsection{Poisson Cohomology.} 

Next, we turn to Poisson cohomology. We state our result for star log symplectic surfaces $(S,D,\pi)$ here using a fixed tubular neighborhood of $D$. A more detailed discussion is given in Section \ref{section4}. 

\begin{theorem}\label{PC}(Poisson cohomology of a log Poisson structure).  Given a log-symplectic surface $(S,D=\left\{Z_1,\dots,Z_k\right\},\pi)$, i.e. a surface with a set $D$ of smooth hypersurfaces $Z_i\subset S$ whose intersections are modeled by a finite set of lines in the plane intersecting at a point, the Poisson cohomology in degrees 0 and 1 is $$H_\pi^0(S)\simeq H^0(S) \hspace{2ex} \text{ and } \hspace{2ex} H^1_\pi(S)\simeq H^1(S)\oplus \bigoplus_i H^0(Z_i).$$
The cohomology in degree 2 is a direct sum of the following vector spaces: 

\begin{enumerate}[i)]
    \item A single copy of $H^2(S)$.  
    \item For each hypersurfaces $Z_i$, a copy of $H^1(Z_i)$. 
    \item For each pair wise intersection of two hypersurfaces $Z_i,Z_j$ with $i<j$, $$\left[H^{0}(Z_i\cap Z_j)\right]^2$$
    \item For each intersection of three hypersurfaces $Z_i,Z_j,Z_k$ with $i<j<k$, $$\left[H^{0}(Z_i\cap Z_j\cap Z_k)\right]^3 $$ 
\item For each intersection of four or more hypersurfaces $Z_{i_1},Z_{i_2},Z_{i_3},\dots,Z_{i_\ell}$ with $i_1<i_2<i_3<\dots <i_\ell$, a copy of 
$$\left[H^{0}(Z_{i_1}\cap Z_{i_2}\cap \dots \cap Z_{i_\ell})\right]^4.$$
\end{enumerate}
\end{theorem}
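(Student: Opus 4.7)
The plan is to compare the Poisson complex $(\Gamma(\Lambda^\bullet TS), d_\pi)$ with the Lie algebroid complex of the star $b$-tangent bundle $\mathcal{A}$ constructed in Section \ref{secat}. Since $\pi$ is non-degenerate as a section of $\Lambda^2 \mathcal{A}$, contraction with $\pi^\sharp$ identifies $(\Gamma(\Lambda^\bullet \mathcal{A}), d_\pi)$ with the $b$-de Rham complex, so its cohomology is ${}^b H^\bullet(S)$. In degree 2 this is given by Theorem \ref{thmclass}; in degrees 0 and 1 the analogous Mazzeo--Melrose-style computation produces $H^0(S)$ and $H^1(S) \oplus \bigoplus_i H^0(Z_i)$, already matching the claimed Poisson cohomology in those degrees. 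The real work is thus to account for the extra degree 2 classes coming from intersections.

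Because the anchor $\mathcal{A} \to TS$ is injective on sections (a $b$-vector field is an ordinary vector field tangent to every $Z_i$), we obtain a short exact sequence of $d_\pi$-complexes
$$0 \to \Gamma(\Lambda^\bullet \mathcal{A}) \longrightarrow \Gamma(\Lambda^\bullet TS) \longrightarrow Q^\bullet \to 0,$$
with $Q^\bullet$ supported on $D$. The associated long exact sequence reduces the theorem to showing $H^0(Q^\bullet) = H^1(Q^\bullet) = 0$ and computing $H^2(Q^\bullet)$ as one extra copy of $H^0(Z_i \cap Z_j)$ per pair, three copies of $H^0(Z_i\cap Z_j\cap Z_k)$ per triple, and four copies of $H^0$ at each $\ell$-fold intersection with $\ell \geq 4$.

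The cohomology of $Q^\bullet$ is local on $D$ and can be computed stratum by stratum. Using an astral atlas and the models of Figure \ref{fig1}, I would first verify that at a smooth point of a single $Z_i$, with local model $\pi = x\partial_x\wedge\partial_y$, the differential $[\pi,\cdot]$ is an isomorphism on the local quotient, so that smooth strata contribute nothing and $H^0(Q^\bullet) = H^1(Q^\bullet) = 0$. At an $\ell$-fold intersection point with local model $\pi = f_\ell\,\partial_x\wedge\partial_y$, $f_\ell$ a product of $\ell$ distinct linear forms, I would introduce a monomial basis for the local quotient indexed by the branches of $D$ and compute the kernel and image of $[\pi,\cdot]$ explicitly.

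The main obstacle is this last step, and in particular explaining why the multiplicity of $H^0\!\left(\bigcap_a Z_{i_a}\right)$ stabilizes at $4$ for $\ell \geq 4$ rather than continuing to grow with $\ell$. I expect the stabilization to come from a combinatorial cancellation in the action of $[\pi,\cdot]$: once four branches meet at a point, each additional branch contributes matched pairs of generators to the kernel and image of $[\pi,\cdot]$ on the local quotient, producing no new cohomology classes. Pinning down the four surviving classes canonically and verifying this cancellation uniformly in $\ell$ is where the delicate combinatorics reside.
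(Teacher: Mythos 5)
Your reduction is set up along a genuinely different route from the paper's: the paper never touches the Lichnerowicz complex directly, but replaces the Poisson Lie algebroid by the rigged algebroid, identifies it with the zero tangent bundle (vector fields vanishing on $D$), and computes its Lie algebroid cohomology by filtering the $0$-de Rham complex by the number of curves (Theorem \ref{0starcohom}); you instead propose the short exact sequence $0\to\Gamma(\Lambda^\bullet\,{}^bTS)\to\Gamma(\Lambda^\bullet TS)\to Q^\bullet\to 0$ and its long exact sequence, which is a legitimate alternative (the inclusion is a chain map because $\pi\in\Gamma(\Lambda^2\,{}^bTS)$ and $b$-multivector fields are closed under the Schouten bracket, and $\omega^\flat$ identifies the subcomplex with $({}^b\Omega^\bullet,d)$, whose cohomology is Theorem \ref{bstarcohom}, not Theorem \ref{thmclass}). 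But as a proof it is incomplete precisely where the theorem has content: all of the multiplicities $2$, $3$, $4$ live in the local computation of $H^\bullet(Q^\bullet)$ at the intersection points, and you defer exactly that step. $Q^0=0$ and the smooth-stratum check are fine, but the vanishing of $H^1(Q^\bullet)$ at intersection points and the determination of $H^2(Q^\bullet)$ there --- locally $C^\infty(U)/(xyR)$, $R=\prod_{i\geq3}(A_ix+B_iy)$, modulo the image of the normal components of vector fields under $[\pi,\cdot]$ with $\pi=u\,xyR\,\partial_x\wedge\partial_y$ --- are not carried out. Also, ``the cohomology of $Q^\bullet$ is local on $D$ and can be computed stratum by stratum'' itself needs an argument, since $[\pi,\cdot]$ is not $C^\infty$-linear; the paper handles the analogous globalization with bump functions and a filtration by the number of curves, and you would need some substitute.

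More seriously, the target you set for the unfinished computation is not the statement being proved. Items iii)--v) are indexed by subsets of curves: at a point where $\ell$ curves meet, every pair of them contributes $2$, every triple contributes $3$, and every subset of four or more contributes $4$. This is visible in the paper's proof, where the graded pieces are sums over pairs $(p,I)$ with $|I|=k$ and contribute $(H^{0}(\{p\}))^3$ for $k=3$ and $(H^{0}(\{p\}))^4$ for each $k\geq4$, and it is what makes the count for the sphere of Example \ref{motivatingexample} come out to $\mathbb{R}^{22}$ (each degree-$3$ point is counted through its three pairs and its one triple). Hence the local contribution at a degree-$\ell$ point is $2\binom{\ell}{2}+3\binom{\ell}{3}+4\sum_{m=4}^{\ell}\binom{\ell}{m}$, which grows with every additional branch; only the multiplicity attached to a fixed subset saturates at $4$. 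Your expected answer (``four copies of $H^0$ at each $\ell$-fold intersection'') and your proposed mechanism --- that past four branches ``each additional branch contributes matched pairs of generators to the kernel and image of $[\pi,\cdot]$\dots producing no new cohomology classes'' --- would give a strictly smaller $H^2(Q^\bullet)$ than the theorem requires as soon as $\ell\geq5$ (e.g.\ $44$ extra dimensions instead of $64$ at a degree-$5$ point). So the missing combinatorial step is not merely undone: as described, it aims at the wrong answer and cannot close the argument.
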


Note that Theorem \ref{PC} generalizes previous work on
the Poisson cohomology of b-symplectic structures with normal
crossing divisors. In particular, we recover the known results for when $D$ consists of transversely intersecting curves. Olga Radko provides a description for disjoint curves \cite{Radko} and Nobutada Nakanishi completes the computation for the case of transversley intersecting curves \cite{Nakanishi}.

\subsection{Reconciling our classification and Poisson cohomology.} Recall that the second Poisson cohomology group of a Poisson structure $\pi$ allows us to describe formal infinitesimal deformations of $\pi$. In this subsection we will discuss how the various cohomology theories we employ throughout this paper sit in the theory of deformations of Lie algebroids at large and what insight they give into understanding deformations of Poisson structures.  

Let us begin by zooming all the way out to a general Lie algebroid $(\mathcal{A},[\cdot,\cdot],\rho)$.  Marius Crainic and Ieke Moerdijk's work \cite{Crainic} introduces a way to study deformations of the bracket $[\cdot,\cdot]$ using a cohomology theory aptly called deformation cohomology. We in particular are concerned with the Poisson Lie algebroid $(T^\ast M, [\cdot,\cdot], \pi^\sharp)$ where the Lie bracket is defined as $$[\alpha,\beta] = \mathcal{L}_{\pi^\sharp(\alpha)}(\beta)-\mathcal{L}_{\pi^\sharp(\beta)}(\alpha)-d(\pi(\alpha,\beta))$$ for all smooth de Rham 1-forms $\alpha, \beta$ on $M$. In Corollary 3 of \cite{Crainic}, Crainic and Moerdijk explain how a class in second degree Poisson cohomology gives a deformation of the Poisson Lie algebroid. In other words, Poisson cohomology measures deformations of the Lie bracket on $T^\ast M$ that remain Poisson, i.e. the bracket can always be defined by a Poisson bivector $\pi$ on $M$. 

In this paper we compute Poisson cohomology by constructing what we call the rigged Lie algebroid $\mathcal{R}$, which is a Lie algebroid isomorphic to the cotangent Lie algebroid $(T^*M,\pi^\sharp)$ of a Poisson manifold $(M,\pi)$. As we will show in detail in Section \ref{section4}, the anchor map of $\mathcal{R}$ is evaluation.  This perspective is more advantageus than working with $T^*M$ and the map $\pi^\sharp$  because we compute cohomology using a complex of forms with an exterior derivative, rather than multi-vector fields with the notoriously intractable Lichnerowicz differential.  

The $b$-de Rham forms, which we use to classify star log symplectic structures, form a subcomplex of the Lichnerowicz complex because they in fact form a subcomplex of $\mathcal{R}$-de Rham forms. The $b$-de Rham cohomology is related to an even more restricted type of infinitesimal deformation of the Poisson Lie algebroid: infinitesimal deformations of the bracket that can always be defined by a star log symplectic structure. 

\begin{center}\begin{minipage}{.7\textwidth} 

\scalebox{.9}{\xymatrix{  0 \ar[r]          & C^0_{def}(S) \ar[r]^\delta  & C^1_{def}(S)\ar[r]^\delta & C^2_{def}(S)\ar[r] & 0 \\ 0 \ar[r] & \mathcal{C}^\infty(S) \ar[u] \ar[r]^{\partial_\pi\hspace{2ex}} & \mathcal{C}^\infty(S,TS) \ar[u] \ar[r]^{\partial_\pi\hspace{1ex}} & \mathcal{C}^\infty(S,\wedge^2 TS) \ar[u] \ar[r] & 0 \\ 0 \ar[r]          & \ar[u] \mathcal{C}^\infty(S) \ar[r]^d  & {}^{b}\Omega^1(S) \ar[u] \ar[r]^d &{}^{b}\Omega^2(S) \ar[u] \ar[r] & 0 
} }

\end{minipage}\begin{minipage}{.26\textwidth} \tiny  \vspace{2ex}

\begin{flushleft} 
{\bf Deformation complex}

\vspace{8ex}

{\bf Lichnerowicz} {\bf complex}

  \vspace{8ex}

{\bf $b$-forms} \vspace{1ex}

\end{flushleft}
\end{minipage}\end{center}\vspace{2ex}  

In special cases, the $b$-cohomology is isomorphic to Poisson cohomology: Guillemin, Miranda, and Pires \cite{Guillemin01} showed that for $b$-symplectic surfaces (see Figure \ref{fig1} for the local model) the $b$-de Rham complex computes Poisson cohomology and $b$-symplectomorphisms give all Poisson isomorphisms. However Geoffrey Scott (see \cite{Scott}, p. 18) points out that in general this is not the case. In fact, our paper is one such example of a case where the $b$-complex does not compute the entire Poisson cohomology. \vspace{3ex}  

{\bf Acknowledgment}: I am grateful to Pierre Albin for sharing many wonderful conversations over numerous cups of coffee and helping me to develop the ideas of this work. Travel support was provided by Pierre Albin's Simon's Foundation grant \#317883. I also greatly appreciate the time Ioan M\u{a}rcut spent chatting with me both in person and via emails about deformations. I also appreciate the helpful suggestions of Rui Loja Fernandes, who read a draft of the introduction. Thanks to Sarah Mousley for clarifying the topic of essential curves. Finally, I wish to thank the referee for the their 
comments and suggestions, which greatly improved the quality of the paper.

\section*{{\bf Index of Notation and Common Terms}}

\noindent \begin{tabular}{ p{3.5cm} p{9.5cm} }

$(S,D)$& A surface $S$ and a divisor $D$, that is a set of smooth hypersurfaces, i.e. curves, $Z\subset S$.\\[7pt]

$Z$ defining function \hspace{2ex}& A defining function for a hypersurface $Z\subset S$, usually denoted $x$. That is,  $x\in\mathcal{C}^{\infty}(S)$ such that 

\begin{center}$Z=\left\{{p\in S:x(p)=0}\right\}$\end{center}

and  $dx(p)\neq 0$ for all $p\in Z$. \\[7pt]

$^bTS$& b-tangent bundle over a pair $(S,D)$, the vector bundle whose sections are the vector fields on $S$ that are tangent  to $Z$ for all $Z\in D$.\\[7pt] 

$(\mathcal{A},[\cdot,\cdot]_\mathcal{A},\rho_\mathcal{A})$& A Lie algebroid over a surface $S$, that is, a triple consisting of a vector bundle $\mathcal{A}\to S$, a Lie bracket $[\cdot,\cdot]_{\mathcal{A}}$ on the $\mathcal{C}^\infty(S)$-module of sections $\Gamma (\mathcal{A})$, and a bundle map $\rho_\mathcal{A}:\mathcal{A}\to TS$ such that
 
\centerline{$\displaystyle{[X,fY]=\mathcal{L}_{\rho_{\mathcal{A}}(X)}f\cdot Y + f[X,Y]}$}\vspace{.5ex}
 
 where $X,Y \in \Gamma(\mathcal{A})$, and $f\in C^\infty(S)$.\\[7pt] 
 
 $^\mathcal{A}\Omega^{k}(S)$& The set $\Gamma(\wedge^k\mathcal{A}^*)$ of smooth sections of the $k$-th exterior power of the dual bundle to $\mathcal{A}$, called the $\mathcal{A}$-de Rham forms on $S$. This is a differential complex with differential operator defined by
 
\centerline{$\displaystyle{(d_\mathcal{A}\beta)(\alpha_0, \alpha_1,\dots, \alpha_k)=}$ } 

\centerline{$\displaystyle{\sum_{i=0}^{k}(-1)^i\rho_{\mathcal{A}}({\alpha_i})\cdot\beta(\alpha_0,\dots,\hat{\alpha}_i,\dots,\alpha_k)\hspace{1ex}+}$}

\centerline{$\displaystyle{\sum_{0\leq i<j\leq k}(-1)^{i+j}\beta([\alpha_i,\alpha_j]_\mathcal{A},\alpha_0,\dots,\hat{\alpha_i},\dots,\hat{\alpha_j},\dots,\alpha_k)}$}

\noindent for $\beta\in {^\mathcal{A}\Omega^{k}(S)}$, and  $\alpha_0,\dots, \alpha_k \in \Gamma(\mathcal{A}).$ This cohomology of this complex is called the Lie algebroid cohomology.\\[7pt]

$^0\mathcal{A}$& The vector bundle whose sections are the $\mathcal{A}$-vector fields on $S$ that are zero at the hypersurface $Z$.\end{tabular}

\section{\bf Star log symplectic surfaces} \label{section2}

In this section we construct a setting where our Poisson structures of interest can be viewed as symplectic structures. This will allow us to establish star log symplectic analogues of familiar results, such as Darboux and Moser's theorem. 

\begin{definition} A \emph{star divisor} $D$ on a surface $S$ is a finite collection $$D=\left\{c_1,c_2,\dots,c_n\right\}$$ of smooth closed curves that are pairwise transverse. 

For any $p\in S$, the \emph{degree} of $p$ is the number of curves of $D$ that contain $p$. In other words, given $p\in S$,  $$\deg(p)=\#\left\{c_i\in D: p\in c_i\right\}.$$

Note \emph{intersection points} of $D$ are precisely points $p\in D$ where $\deg(p)\geq 2$.  \end{definition} 

\subsection{Astral atlas}\label{secat} We can equip a surface with a star divisor $(S,D)$ with an atlas $\left\{(U_\alpha,\phi_\alpha)\right\}$, which we call an astral atlas, that models $D$ as the intersection of lines at the origin in the xy-plane.  

\subsubsection{Star metric} Let $(S,D)$ be a surface with a star divisor $D$. A \emph{star metric} on $S$ is a Riemannian metric $g$ such that for all intersection points $p\in S$, there exists a chart $(U,\phi)$ around $p$ such that each arc in $D\cap U$ is a geodesic.  We will provide a sketch of Max Neumann-Coto's construction of such a metric. A more detailed argument can be found in Lemma 1.2 of his paper \cite{Max}.

We start with any Riemannian metric $g_S$ on our surface $S$ and a regular neighborhood $N$ of $D$. By regular neighborhood we mean a union of neighborhoods $c_i\times(-\varepsilon,\varepsilon)$ such that the intersection of any two is either empty or diffeomorphic to $(-\varepsilon,\varepsilon)\times (-\varepsilon,\varepsilon)$. We put a flat metric $g_N$ on $N$ to make the rectangles and polygons Euclidean and so that each $c_i$ is a geodesic with respect to $g_N$. \vspace{1ex} 

\noindent \begin{minipage}{.5\linewidth}\begin{Figure} Constructing $g$. \vspace{2ex}

\includegraphics[scale=.75]{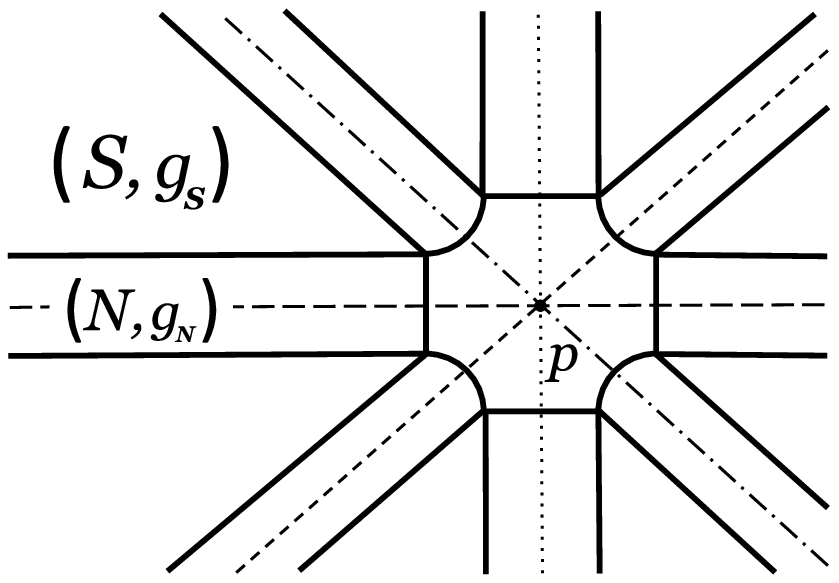} \end{Figure} \end{minipage}\hfill\begin{minipage}{.5\linewidth} 

\hspace{2ex} Let $\displaystyle{\left\{p_1,\dots,p_m\right\}}$ be the intersection points of $(S,D)$. Recall that in a closed surface $S$, an essential curve $\gamma$ is one that is not homotopic to a point, that is $[\gamma]\in\pi^1(S)$ is nontrivial. In any compact neighborhood of each $p_j$, there is a lower bound for the lengths of essential curves in $S\setminus N$. We scale $g_S$ by some positive real number $k$ so that the lengths of these essential curves with respect to $kg_S$ are greater than the lengths of each $c_i$ with respect to $g_N$.

\end{minipage}\vspace{1ex}
Using appropriate bump functions, we construct a new metric on $S$ by patching $kg_S$ and $g_N$ together. Intuitively, this new metric makes $S$ look like a mountain range around each intersection point $p_j$ with the curves in $D$ cutting out the ravines between peaks. 

\subsubsection{Star charts} A star metric allows us to construct charts that model the intersection of curves in $D$ as the intersection of lines in the plane. 
\begin{lemma}\label{starchartlem}
Let $(S,D)$  be a surface with a star divisor $D$. Let $g$ be a star metric on $(S,D)$. Fix an ordering $\left\{c_1,\dots,c_n\right\}$ of the curves in $D$. At every point $p\in S$ of degree $\geq$ 2, there exists a coordinate chart $(U,(x,y))$ centered at $p$ such that the following conditions are satisfied. 
\begin{enumerate}
\item\label{c1} Given $(\alpha,\beta)=\min \left\{(i,j)\in \mathbb{N}\times\mathbb{N}:i<j \text{ and }p\in c_i, p\in c_j\right\}$, $$c_\alpha\cap U=\left\{x=0\right\}\text{ and }c_\beta\cap U=\left\{y=0\right\}.$$
\item\label{c2} For each $c_\ell$ with $p\in c_\ell$, there exist real numbers $A_\ell$ and $B_\ell$ such that $$c_\ell\cap U=\left\{A_\ell x+B_\ell y=0\right\} .$$
\end{enumerate}
\end{lemma}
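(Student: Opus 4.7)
The plan is to build the chart from the exponential map of the star metric $g$ at the point $p$, using a cleverly chosen basis of $T_pS$. Since $g$ is a star metric and $p$ is an intersection point, by definition there is some neighborhood $V$ of $p$ on which every arc of $D\cap V$ is a $g$-geodesic. This is the key feature that makes the whole construction go through.

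First I would set up the basis. The curves $c_\alpha$ and $c_\beta$ are pairwise transverse members of $D$ passing through $p$, so their tangent vectors at $p$ are linearly independent in $T_pS$. Choose $e_1\in T_pc_\beta\setminus\{0\}$ and $e_2\in T_pc_\alpha\setminus\{0\}$; then $(e_1,e_2)$ is a basis of $T_pS$. Let $\exp_p\colon T_pS\to S$ denote the $g$-exponential map, and pick $r>0$ small enough that $\exp_p$ restricts to a diffeomorphism from an open ball $B_r\subset T_pS$ onto an open set $U\subset V$, and moreover small enough that, for each of the finitely many $c_\ell$ through $p$, the component $c_\ell\cap U$ is the image under $\exp_p$ of a single line segment through the origin in $T_pS$ (this last requirement is used to avoid a curve re-entering $U$ along an unrelated branch). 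Define the chart $(x,y)\colon U\to\mathbb{R}^2$ by writing
\[
\exp_p^{-1}(q)=x(q)\,e_1+y(q)\,e_2,
\]
so $(x,y)$ is a smooth chart centered at $p$.

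For the two verifications, I would use the standard fact that under $\exp_p$, geodesics through $p$ correspond to lines through $0$ in $T_pS$. For any $c_\ell$ through $p$, the arc $c_\ell\cap U$ is a geodesic of $g$ passing through $p$, so $\exp_p^{-1}(c_\ell\cap U)$ is an open segment of a line through the origin, which in the basis $(e_1,e_2)$ has the form $\{A_\ell x+B_\ell y=0\}$ for some $(A_\ell,B_\ell)\neq (0,0)$; this gives (\ref{c2}). For (\ref{c1}), the tangent line of $c_\alpha$ at $p$ is spanned by $e_2$, so its image under $\exp_p$ is exactly $\{x=0\}\cap U$, and symmetrically $c_\beta\cap U=\{y=0\}$.

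The only genuine obstacle is administrative rather than conceptual: one must be sure that $U$ is chosen small enough so that, for each $c_\ell$ with $p\in c_\ell$, the intersection $c_\ell\cap U$ is a connected geodesic arc through $p$ rather than a union of several disjoint arcs (which could happen if $c_\ell$ were to re-enter $U$). Since $D$ consists of finitely many smooth embedded closed curves and each $c_\ell$ is a proper embedded submanifold, this is arranged by shrinking $U$, and all subsequent assertions then follow from the geodesic-to-line correspondence.
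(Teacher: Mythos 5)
Your proposal is correct and is essentially the paper's own argument: the paper likewise takes Riemannian normal coordinates for the star metric at $p$ (so that the geodesic arcs of $D$ through $p$ become lines through the origin) and then applies a linear isomorphism sending the two minimal-index curves to the coordinate axes, which is exactly what your choice of basis $(e_1,e_2)$ in $T_pS$ accomplishes. The only difference is that you make explicit the shrinking of $U$ and the geodesic-uniqueness step, which the paper leaves implicit.
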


If $p\in S$ is an intersection point, then charts centered at $p$ satisfying conditions (\ref{c1}) and (\ref{c2}) from Lemma \ref{starchartlem} are called \emph{star charts}. If $p\in S$ is not an intersection point, then any chart around $p$ not containing an intersection point is called a \emph{star chart}. An \emph{astral atlas} is an atlas consisting of star charts.  

\noindent\begin{minipage}{1\linewidth} \hspace{5ex}\makebox[0pt][l]{
  \raisebox{-\totalheight}[0pt][0pt]{
    \includegraphics[scale=1 ]{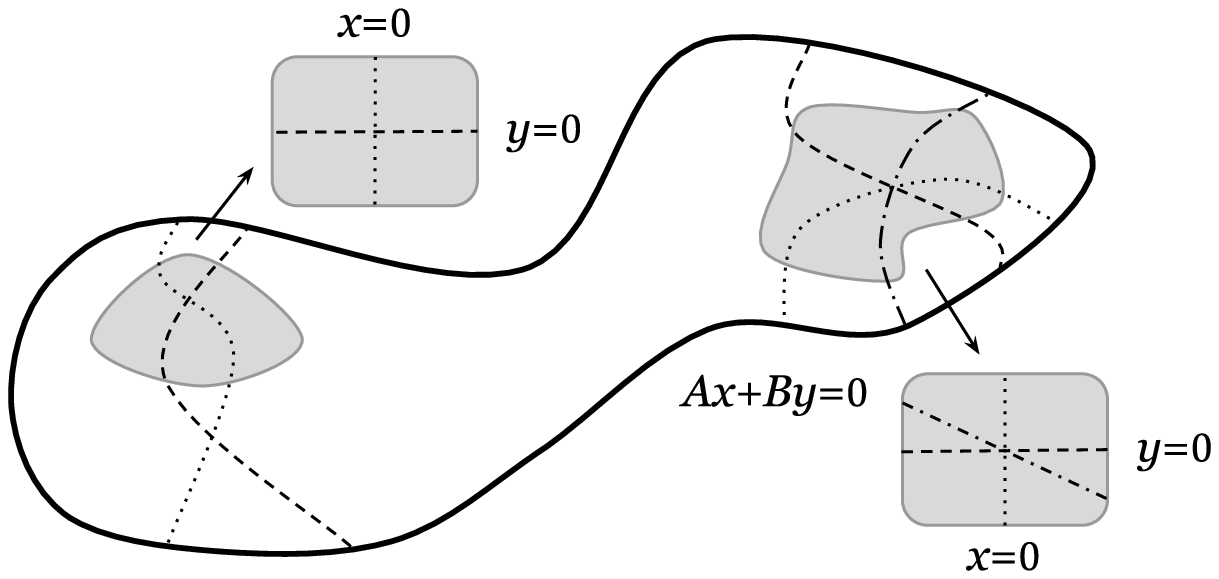}}}

\begin{minipage}{.23\linewidth}\vspace{4ex}

\begin{Figure} Star charts of $(S,D)$. \end{Figure}
\vspace{20ex}

~
\end{minipage}\hfill\begin{minipage}{.5\linewidth}
~
\end{minipage}
\end{minipage} \vspace{1ex}

\begin{proof} Let $(S,D)$  be a surface with a star divisor $D$. Consider an atlas of Riemannian normal coordinate charts. Refine this to another atlas $\left\{(U,\phi)\right\}$ by taking  charts that are also star charts.
 
Fix an ordering $\left\{c_1,\dots,c_n\right\}$ on the elements of $D$. Given an intersection point $p\in S$ and a chart $(U,\phi)$, centered at $p$, from our atlas, we will take a linear isomorphism of this chart such that the two curves passing through $p$ with the smallest indices are mapped to $\left\{x=0\right\}$ and $\left\{y=0\right\}$ respectively. An astral atlas consists of charts that have been transformed according to this ordering on $D$. \end{proof} 
  
\subsubsection{Transition functions between star charts}\label{startfcns}
Let $(S,D,g)$ be a surface, star divisor, and star metric. Given an ordering on $D$, let $\left\{(U_\alpha,\phi_\alpha)\right\}$ be an astral atlas for the triple $(S,D,g)$. Let $(U,\phi)=(U,(x,y))$ and $(V,\rho)=(V,(\tilde{x},\tilde{y}))$ be two star charts around an intersection point $p\in S$. Further assume $deg(p)\geq 3$.     

Then $\phi(U\cap D)$ is the intersection of at least 3 lines at a point. Let $c_1,c_2,c_3$ denote the curves of minimal index passing through $p$. Then $c_1$ is locally given by $\left\{x=0\right\}$ and $c_2$ is locally given by $\left\{y=0\right\}$. Further, $c_3$ is defined by an equation of the form $ax+by=0$ for non-zero real numbers $a,b$. \vspace{2ex} 
 
\noindent\begin{minipage}{1\linewidth} \makebox[0pt][l]{
  \raisebox{-\totalheight}[0pt][0pt]{
    \includegraphics[scale=.9 ]{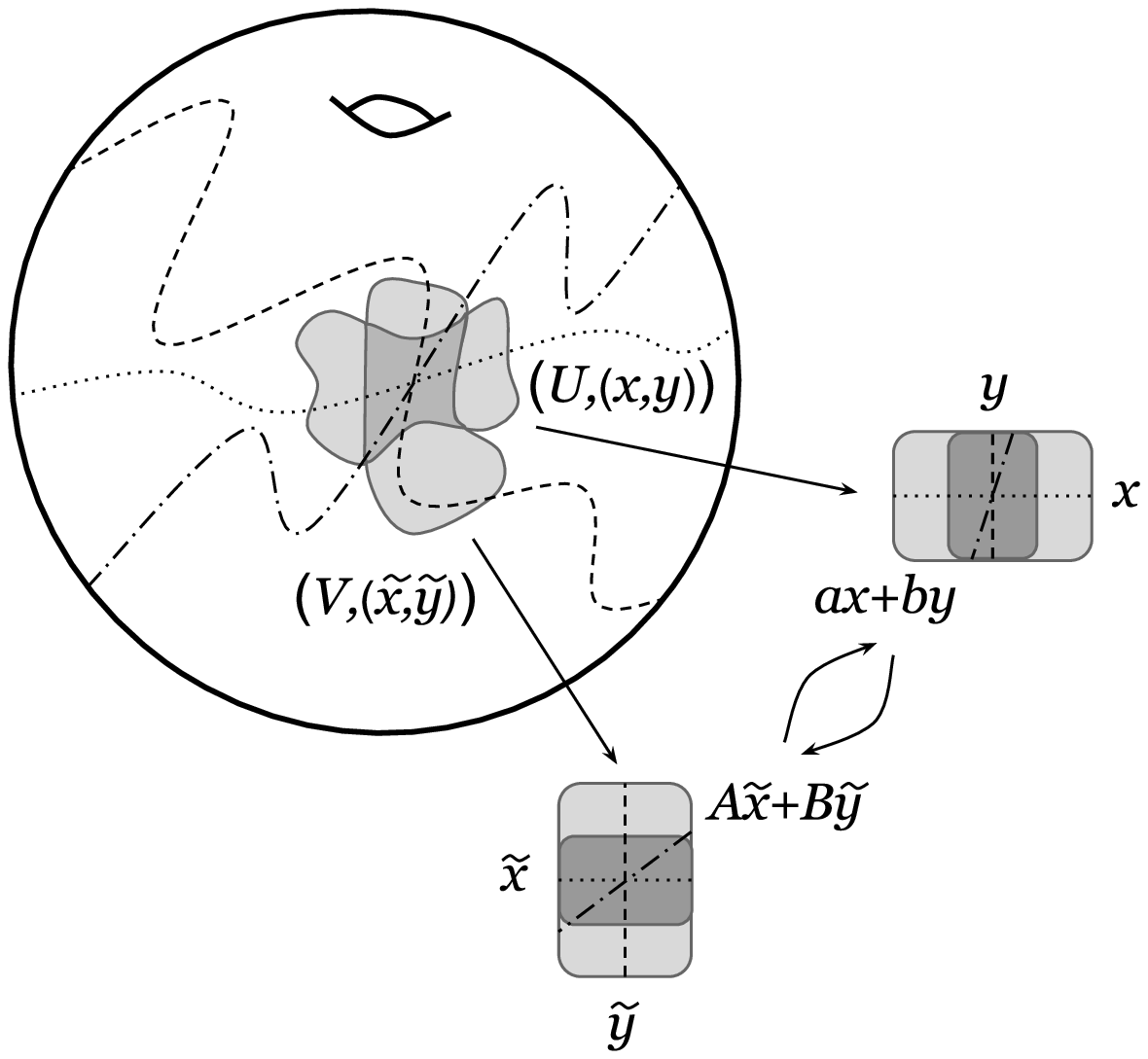}}}
    
\noindent \begin{minipage}{.5\linewidth}
 ~
\end{minipage}\hfill\begin{minipage}{.4\linewidth} Consider the transition functions between $\phi(U)$ and $\rho(V)$. In $\rho(V)$, let $\widetilde{y}$ define the image of $c_1$ and let $\widetilde{x}$ define the image of $c_2$. Then $\widetilde{x}=fx\text{ and } \widetilde{y}=gy$ for positive functions $f,g$.
\end{minipage}\vspace{24ex}
 
\noindent\begin{minipage}{.3\linewidth}
 \begin{Figure} Transition functions between star charts at a point with degree at least three.\end{Figure}
\end{minipage}\hfill\begin{minipage}{.4\linewidth} \vspace{10ex}

Then $A\widetilde{x}+B\widetilde{y}=h(ax+by)$ for $A,B\in\mathbb{R}$ and nonvanishing $h\in\mathcal{C}^\infty(U\cap V)$.  Thus we have $(Af-ha)x=(hb-Bg)y$.
\end{minipage} \end{minipage}\vspace{1ex}
 
\noindent Accordingly, there is a function $M\in\mathcal{C}^\infty(U\cap V)$ such that $$ (Af-ha)=My\text{ and }(hb-Bg)=Mx.$$ Thus $$f=(ax+by)\dfrac{M}{Ab}+\dfrac{aB}{Ab}g. $$ Notice that when $ax+by=0$, $$f=\dfrac{aB}{Ab}g.$$ Thus at the curve $c_3$, transition functions scale the $x$ and $y$ direction by the same function up to a constant. Note that off of the lines, the transition functions are less restricted - due to the function $M$ - and need only give a local diffeomorphism. 

In the case when we have at least four curves, we can extract further restrictions for transition functions. Given a chart centered at the intersection of $n\geq 4$ curves, we will show that this constant must be the same at each curve $c_3,\dots,c_n$. Consider the line $c_4$ given by $cx+ey=0=C\widetilde{x}+E\widetilde{y}$, for some $c,e,C,E\in\mathbb{R}$. Then for some smooth function $N\in\mathcal{C}^\infty(U\cap V),$ $$f=(cx+ey)\dfrac{N}{Ce}+\dfrac{cE}{Ce}g.$$ Thus \begin{equation}\label{eq:A1} (ax+by)\dfrac{M}{Ab}+\dfrac{aB}{Ab}g=(cx+ey)\dfrac{N}{Ce}+\dfrac{cE}{Ce}g \end{equation} By considering the value of equation (\ref{eq:A1}) at the origin, we conclude that $\dfrac{cE}{Ce}=\dfrac{aB}{Ab}.$ \\

%Finally, we will show that in the case of at least 4 curves intersecting at the origin, the function $M$ must vanish at the origin. We can simplify equation (\ref{eq:A1}) to the expression $$(ax+by)\dfrac{M}{Ab}=(cx+ey)\dfrac{N}{Ce}.$$ Since $ax+by\neq cx+ey$, the function $M=P(cx+ey)$ for some smooth function $P\in\mathcal{C}^\infty(U\cap V)$. Thus $M=0$ at the origin. 

In the next subsection we will use our understanding of transition functions in an astral atlas to argue that we have a certain fiber bundle structure. For a technical reason in that computation, in degree 3 charts we must impose the condition 
that our transition functions fix four lines. Below is a summary of the properties we have observed and this technical assumption. \\

\noindent {\bf Properties of transition functions between star charts at intersections of deg $\geq 3$.} \emph{Let $(S,D,g)$ be a surface, star divisor, and star metric. Given an ordering on $D$, let $\left\{(U_\alpha,\phi_\alpha)\right\}$ be an astral atlas for the triple $(S,D,g)$. Let $(U,\phi)=(U,(x,y))\text{ and }(V,\rho)=(V,(\tilde{x},\tilde{y}))$ be two star charts around an intersection point $p\in S$. Further assume $deg(p)\geq 3$.}

\emph{There are non-vanishing functions $f,g\in\mathcal{C}^\infty(U\cap V)$ so that $x$ and $y$ transition to $\widetilde{x}=fx$ and $\widetilde{y}=gy$. For every line $\ell$ in $D\cap (U\cap V)$, excluding the lines defined by $\left\{x=0\right\}$ and $\left\{y=0\right\}$, there is a real number $c$ such that $$f\big|_{\ell}=cg\big|_{\ell}.$$ }{\bf A necessary technical assumption at intersections of deg $=3$.} \emph{In charts centered at points of intersection with $deg=3$, there exists a line through the origin $L\not\in D\cap(U\cap V)$  such that $f|_L=cg|_L$. }

\newpage 

\subsection{b-tangent bundle over an astral atlas} 
Given a surface $S$ with star divisor $D$, we will use an astral atlas to construct a vector bundle whose smooth sections are vector fields tangent to $D$. We will begin by defining a vector bundle over each chart. Then, by arguing that all the transition maps are compatible with this structure, we will show that we have in fact constructed a rank 2 vector bundle. 
Let $p_1,\dots,p_m$ be all the points of $S$ with degree greater than $2$. Consider the surface $\widetilde{S}=S\setminus\left\{p_1,\dots,p_m\right\}$. Then  $$(\widetilde{S},\widetilde{D})=(S\setminus\left\{p_1,\dots,p_m\right\},D\cap \widetilde{S})$$ is a surface with normal crossing divisor, that is, a surface with a set of smooth curves that intersect transversely. The b-tangent bundle ${}^bT\widetilde{S}$, referred to as the log tangent bundle ${}^{log}T\widetilde{S}$ in \cite{Gualtieri02}, is the vector bundle whose smooth sections are the vector fields of $\widetilde{S}$ that are tangent to $\widetilde{D}$. In other words, the b-tangent bundle has smooth sections 

$$\left\{u\in\mathcal{C}^\infty(\widetilde{S},T\widetilde{S}): u|_c\in \mathcal{C}^\infty(c,Tc) \text{ for all } c\in \widetilde{D}\right\}.$$

This will be our vector bundle away from $p_1,\dots,p_m$, the intersection points of degree greater than two. Let $\left\{(U_\alpha,\phi_\alpha)\right\}$ be an astral atlas of $(S,D)$. 

Consider a degree $k$ point $p\in S$ where $k\geq 3$. Then $p$ sits in some curves $c_1,\dots,c_k$. Let $(U,(x,y))$ be a star chart around $p$. Consider the vector fields, depicted in Figure \ref{vfgraph}, $$V=x\dfrac{\partial}{\partial x}+y\dfrac{\partial}{\partial y}\text{ and }W=x\dfrac{\partial}{\partial x}-y\dfrac{\partial}{\partial y}.$$ 
\begin{Figure}\label{vfgraph} Vector fields on $TU$. \vspace{2ex}

\begin{minipage}{.5\textwidth}\begin{center} 
 \includegraphics[scale=.5]{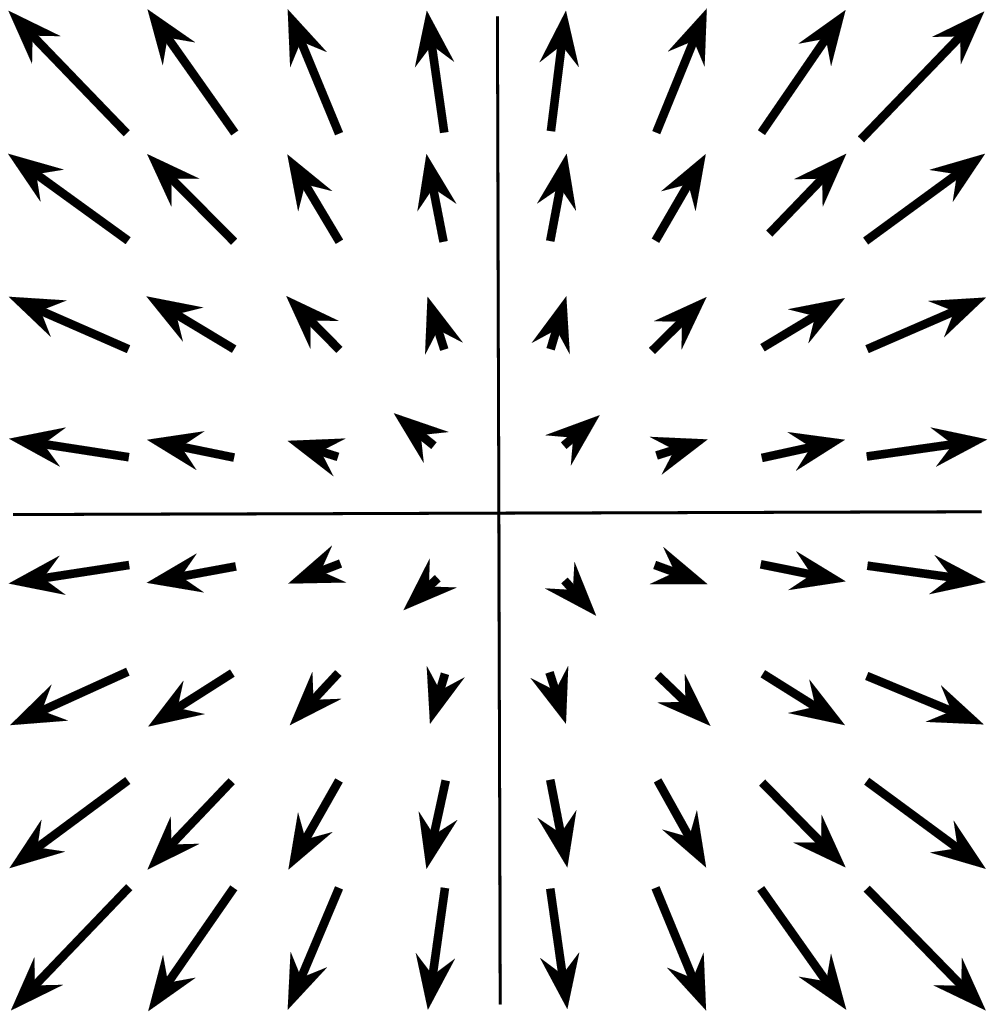}
 
 {\bf $V$} 
 
 \end{center}
 \end{minipage} \begin{minipage}{.5\textwidth}\begin{center}\includegraphics[scale=.5]{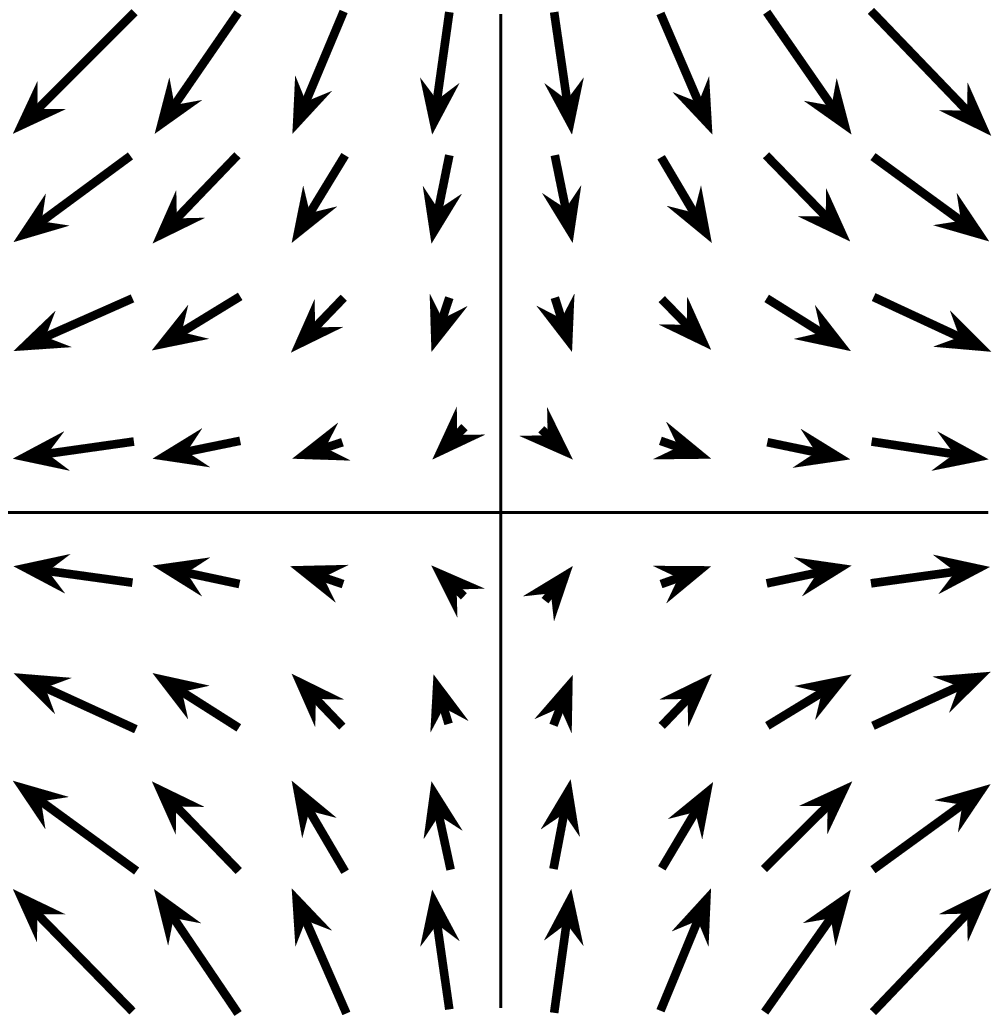}
 
 {\bf $W$} 
 \end{center}
 \end{minipage} 
 
\end{Figure}

Away from $c_1$ and $c_2$, these generate $TU$. Further, away from the origin, $V$ is tangent to all lines passing through the origin. \vspace{2ex}

Let $c_3,\dots,c_k$ be locally defined by $A_3x+B_3y,\dots$, and $A_kx+B_ky$ respectively, i.e. $c_i\cap U=\left\{A_ix+B_iy=0\right\}$. Then we define two vector fields \begin{equation}\label{eqvf}
V=x\dfrac{\partial}{\partial x}+y\dfrac{\partial}{\partial y}\hspace{2ex}\text{ and }\hspace{2ex}\widetilde{W}=\prod_{i=3}^k\Big(A_ix+B_iy\Big)\left(x\dfrac{\partial}{\partial x}-y\dfrac{\partial}{\partial y}\right).\end{equation}

In the following lemma we will show that $V$ and $\widetilde{W}$ provide a basis for the space of smooth vector fields tangent to $D\cap U$. 

\begin{lemma} Let $(U,(x,y))$ be a star chart around a point $p\in S$ of degree at least 3. Any vector field $V\in\mathcal{C}^\infty(U;TU)$ that is tangent to $D\cap U$ can be expressed as a $\mathcal{C}^\infty(U)$-linear combination of the vector fields given in equation (\ref{eqvf}). 
\end{lemma}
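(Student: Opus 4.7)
The plan is to work in the coordinates $(x,y)$ of the star chart $U$, write an arbitrary smooth vector field tangent to $D\cap U$ as $\xi = a(x,y)\partial_x + b(x,y)\partial_y$, and successively extract the constraints imposed by tangency to each $c_i$ using Hadamard's lemma.

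First I would use tangency to $c_1 = \{x=0\}$ and $c_2 = \{y = 0\}$: applying $\xi$ to $x$ gives $a$, and this must vanish along $c_1$, so Hadamard's lemma yields $a = x\alpha$ with $\alpha\in\mathcal{C}^\infty(U)$; similarly $b = y\beta$. Thus $\xi = x\alpha\,\partial_x + y\beta\,\partial_y$. Next, tangency to $c_i = \{A_i x + B_i y = 0\}$ for $i\geq 3$ requires $\xi(A_ix+B_iy) = A_i x \alpha + B_i y\beta$ to vanish on $c_i$. On $c_i$ we may substitute $B_i y = -A_i x$, so the condition reduces (away from the origin, hence everywhere by continuity) to $\alpha - \beta \equiv 0$ on $c_i$.

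The core step is therefore to show that a smooth function $\alpha - \beta$ vanishing on the union $c_3\cup\cdots\cup c_k$ of distinct lines through the origin is divisible by $P := \prod_{i=3}^{k}(A_i x + B_i y)$. I would prove this by induction on the number of lines: Hadamard's lemma first gives $\alpha-\beta = (A_3 x + B_3 y)f_3$ for a smooth $f_3$; then since the lines $c_3$ and $c_4$ are distinct, the function $A_3 x + B_3 y$ is nonzero on $c_4 \setminus\{0\}$, so $f_3$ vanishes on $c_4$ away from the origin and hence (by continuity) on all of $c_4$, allowing another application of Hadamard. Iterating, $\alpha - \beta = P\cdot h$ for some $h\in\mathcal{C}^\infty(U)$. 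The main subtlety lies here, because divisibility of smooth functions by a smooth defining function requires one to verify at each stage that the quotient remains smooth and vanishes on the next line away from the origin.

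Finally, I would reconstruct $\xi$ in the desired form by solving
\begin{equation*}
fV + g\widetilde{W} = x(f + gP)\partial_x + y(f - gP)\partial_y = x\alpha\,\partial_x + y\beta\,\partial_y,
\end{equation*}
which forces $f = \tfrac{1}{2}(\alpha+\beta)$ and $gP = \tfrac{1}{2}(\alpha - \beta) = \tfrac{1}{2}Ph$. Both $f\in\mathcal{C}^\infty(U)$ and $g = \tfrac{1}{2}h\in\mathcal{C}^\infty(U)$ are smooth, producing the required $\mathcal{C}^\infty(U)$-linear combination $\xi = fV + g\widetilde{W}$.
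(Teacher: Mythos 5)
Your proposal is correct and is essentially the paper's argument: write the field in star-chart coordinates, convert tangency to each line into a vanishing condition, and repeatedly apply smooth divisibility (Hadamard) by the linear defining functions, inducting over the lines $c_3,\dots,c_k$. Your symmetric/antisymmetric packaging via $\alpha\pm\beta$ together with the explicit solve $f=\tfrac12(\alpha+\beta)$, $g=\tfrac12 h$ makes the spanning step a bit more transparent than the paper's two-free-function normal form for $(a,b)$ followed by special choices of the parameters, but the underlying steps coincide.
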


\begin{proof} 
Given a star chart $(U,(x,y))$ centered at $p$, by definition the set $D\cap U$ is given by the lines $x=0, y=0$,  $A_3x+B_3y=0$,\ldots, $A_kx+B_ky=0$. We want to find all vector fields that are tangent to these lines. 

An arbitrary smooth vector field in $U$ has the form $$V=a(x,y)\dfrac{\partial}{\partial x}+b(x,y)\dfrac{\partial}{\partial y}$$ for $a(x,y), b(x,y)\in\mathcal{C}^\infty(U)$. \vspace{2ex}

{\bf Claim:} The functions $a$ and $b$ satisfy $$ a=x\left(\dfrac{hy\prod_{i=4}^k\Big(A_ix+B_iy\Big)}{A_3}+c_3\right)\text{ and }
b=y\left(c_3-\dfrac{hx\prod_{i=4}^k\Big(A_ix+B_iy\Big)}{B_3}\right) 
$$ for arbitrary smooth functions $h,c_3\in\mathcal{C}^\infty(U)$. \vspace{2ex}

We will prove this claim by induction on the degree of $p$. \vspace{2ex}

{\bf Base case:} Assume $\deg p =3$. Notice that $V$ is tangent to the line $x=0$ if and only if $$\left<V\cdot \dfrac{\partial}{\partial x}\right>_{g_{\mathbb{R}^2}}\bigg|_{\left\{x=0\right\}}=0.$$ Equivalently, $V$ is tangent to $x=0$ if and only if $a(x,y)|_{\left\{x=0\right\}}=0$, i.e. \begin{equation}\label{e1} a(x,y)=a^\prime(x,y)x\end{equation} for $a^\prime(x,y)\in\mathcal{C}^\infty(U)$.  Similarly, $V$ is tangent to the line $y=0$ if and only if $b(x,y)|_{\left\{y=0\right\}}=0$. That is, \begin{equation}\label{e2} b(x,y)=b^\prime(x,y)y\end{equation} for $b^\prime(x,y)\in\mathcal{C}^\infty(U)$. Because the gradient of a function is normal to its level sets, $V$ is tangent to $A_3x+B_3y=0$ precisely when $$\left<V\cdot\left(A_3\dfrac{\partial}{\partial x}+B_3\dfrac{\partial}{\partial y}\right)\right>_{g_{\mathbb{R}^2}}\bigg|_{\left\{A_3x+B_3y=0\right\}}=0.$$ Equivalently, $V$ is tangent to $A_3x+B_3y=0$ if and only if \begin{equation}\label{e3} A_3a(x,y)+B_3b(x,y)=c_3(x,y)(A_3x+B_3y)\end{equation} for $c_3(x,y)\in\mathcal{C}^\infty(U)$. 

We want to find all possible $a,b$ satisfying equations (\ref{e1}), (\ref{e2}), and (\ref{e3}). Substituting $a=a^\prime x$ and $b=b^\prime y$, we have $$A_3a^\prime x+B_3b^\prime y=c_3(A_3x+B_3y).$$ Rearranging terms, $$xA_3(a^\prime -c_3)=yB_3(c_3-b^\prime).$$ Notice that $y$ divides both the right and left hand sides, and thus $$A_3(a^\prime-c_3)=ey$$ for some $e\in\mathcal{C}^\infty(U)$. Further, $B_3(c_3-b^\prime)=ex$. We can solve for $a^\prime$ and $b^\prime$ respectively: $$ a^\prime= \dfrac{ey}{A_3}+c_3\text{ and }
b^\prime= c_3-\dfrac{ex}{B_3}.$$ Thus, we have shown that if $V$ is tangent to the lines $x=0,y=0$, and $A_3x+B_3y=0$, then $$ a=x\left( \dfrac{ey}{A_3}+c_3\right)\text{ and }
b=y\left(c_3-\dfrac{ex}{B_3}\right).$$ 

Note that for any smooth functions $c_3,e\in \mathcal{C}^\infty(U)$, we have 

\centerline{$a=\mathcal{O}(x)$, $b=\mathcal{O}(y)$, and $A_3a+B_3b=\mathcal{O}(A_3x+B_3y)$.}

Consequently, there are no restrictions on $c_3$ or $e$ and we have a characterization of all vector fields that are tangent to these three lines.  \vspace{2ex}

{\bf Induction step:} Assume that the claim is true for a point $p$ satisfying $\deg p=n-1$. We will verify the formula at a point $p$ of degree $n$. \vspace{2ex}

The set $D\cap U$ is given by the lines $$x=0, y=0,A_3x+B_3y=0,\ldots,A_nx+B_ny=0.$$ We want to find all vector fields that are tangent to these lines. Since a vector field tangent to these lines is certainly also tangent to the lines $$x=0, y=0,  A_3x+B_3y=0,\ldots, A_{n-1}x+B_{n-1}y=0,$$ then we know that $a$ and $b$ at least satisfy 
$$ a=x\left(\dfrac{hy\prod_{i=4}^{n-1}\Big(A_ix+B_iy\Big)}{A_3}+c_3\right),
b=y\left(c_3-\dfrac{hx\prod_{i=4}^{n-1}\Big(A_ix+B_iy\Big)}{B_3}\right). 
$$

Notice that $V$ is tangent to the line $A_nx+B_ny=0$ precisely when $$\left<V\cdot\left(A_n\dfrac{\partial}{\partial x}+B_n\dfrac{\partial}{\partial y}\right)\right>_{g_{\mathbb{R}^2}}\bigg|_{\left\{A_nx+B_ny=0\right\}}=0.$$ Equivalently, $V$ is tangent to $A_nx+B_ny=0$ if and only if \begin{equation}\label{e4} A_na(x,y)+B_nb(x,y)=c_n(x,y)(A_nx+B_ny)\end{equation} for $c_n(x,y)\in\mathcal{C}^\infty(U)$. 

By substituting in our expressions for $a$ and $b$ and rearranging terms, we have  $$c_3(A_nx+B_ny)+hxy\prod_{i=4}^{n-1}\left(A_ix+B_iy\right)\left(\dfrac{A_n}{A_3}-\dfrac{B_n}{B_3}\right)=c_n(A_nx+B_ny).$$

Note that $\frac{A_n}{A_3}-\frac{B_n}{B_3}=0$ if and only if $A_nx+B_ny=0$ defines the same line as $A_3x+B_3y=0$. Notice that $A_nx+B_ny$ divides both the right hand side and the term $c_3(A_nx+B_ny)$, and thus $h=k(A_nx+B_ny)$ for some smooth function $k\in\mathcal{C}^\infty(U)$. 

Thus, we have shown if $V$ is tangent to the lines $$x=0, y=0,A_3x+B_3y=0,\ldots, A_nx+B_ny=0,$$ then $$ a=x\left(\dfrac{ky\prod_{i=4}^{n}\Big(A_ix+B_iy\Big)}{A_3}+c_3\right),
b=y\left(c_3-\dfrac{kx\prod_{i=4}^{n}\Big(A_ix+B_iy\Big)}{B_3}\right). 
$$ Note that for any smooth functions $c_3,k\in \mathcal{C}^\infty(U)$, we have 

\centerline{$a=\mathcal{O}(x)$, $b=\mathcal{O}(y)$, \ldots, and  $A_na+B_nb=\mathcal{O}(A_nx+B_ny)$.} Consequently, there are no restrictions on $c_3$ or $k$ and we have a characterization of all vector fields that are tangent to a set of $n$ lines passing through the origin. By choosing $c_3=1,k=0$ and  $\displaystyle{c_3=(A_3x-B_3y)\prod_{i=4}^{n}(A_ix+B_iy)}$, $k=2A_3B_3$, we have a local frame provided by the vector fields defined in equation \ref{eqvf}. 

\end{proof}

Our final task to construct the $b$-tangent bundle over a star divisor is to show that the local trivializations $V$ and $\widetilde{W}$ are preserved under the transition functions of an astral atlas. 

\begin{lemma} Let $(S,D,g)$ be a surface, star divisor, and star metric with an associated astral atlas $\left\{(U_\alpha,\phi_\alpha)\right\}$. The local vector fields defined in equation (\ref{eqvf}) and transition functions given in Section \ref{startfcns} define a vector bundle over $S$.\end{lemma}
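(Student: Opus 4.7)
The plan is to build the bundle by specifying local trivializations on star charts and verifying that their transition maps are smooth linear isomorphisms satisfying the cocycle condition. Away from the intersection points $p_1,\ldots,p_m$ of degree $\geq 3$, the pair $(\widetilde S,\widetilde D)$ is a surface with normal crossing divisor and we recover Melrose's standard $b$-tangent bundle; so it suffices to concentrate on star charts meeting one of the $p_j$.

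First, on any star chart $(U,(x,y))$ centered at a degree-$k$ point with $k\geq 3$, the previous lemma shows that $V$ and $\widetilde W$ generate the $\mathcal{C}^\infty(U)$-module of smooth vector fields tangent to $D$. Freeness of this module follows because on the open dense subset of $U$ where the coordinate axes and the lines of $D$ are avoided, $V$ and $\widetilde W$ are pointwise linearly independent in $TU$, so any smooth relation $\alpha V+\beta\widetilde W=0$ forces $\alpha=\beta=0$. Thus each such chart carries a rank-$2$ local trivialization whose fiber coordinates are the coefficients on $V$ and $\widetilde W$.

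Next I would show that on the overlap $U\cap U'$ of two star charts centered at the same $p_j$ (the only nontrivial case, since the points $p_j$ are isolated), both frames span the same sheaf of $D$-tangent vector fields, so the change of basis lies automatically in $GL_2(\mathcal{C}^\infty(U\cap U'))$. The substantive step is to compute this change of basis explicitly from the transition data of Section \ref{startfcns}. Writing $\widetilde x=fx,\ \widetilde y=gy$ with $f,g$ non-vanishing, a direct calculation gives
$$\phi_\ast V \;=\; \widetilde V \;+\; x(xf_x+yf_y)\,\partial_{\widetilde x} \;+\; y(xg_x+yg_y)\,\partial_{\widetilde y},$$
whose correction term is a smooth $D$-tangent vector field and hence expressible in the new frame $\{\widetilde V,\widetilde W'\}$ via the previous lemma. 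For $\widetilde W$, push-forward mixes the defining product $\prod_{i=3}^k(A_ix+B_iy)$ with $x\partial_x-y\partial_y$. The key input is the constraint $f|_{\ell_i}=c\,g|_{\ell_i}$ with a common constant $c$ along each divisor line $\ell_i$ through $p_j$ (and, at degree-$3$ points, the fourth-line technical assumption), which forces $\prod(A_ix+B_iy)$ to transform into a smooth non-vanishing multiple of $\prod(\widetilde A_i\widetilde x+\widetilde B_i\widetilde y)$ along $D$, and ensures $x\partial_x-y\partial_y$ pushes forward to a smooth combination of $\widetilde V,\widetilde W'$ whose leading coefficient on $\widetilde W'$ is nonzero at $p_j$.

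Overlaps between a high-degree star chart and a chart lying in $\widetilde S$ are contained in $\widetilde S$ and reduce to the standard $b$-tangent bundle transition, since the product factor in $\widetilde W$ is nonvanishing off the corresponding divisor lines. Finally, the cocycle condition on triple overlaps is automatic: on each overlap the transition matrix is uniquely determined as the change of basis between two frames of the same free $\mathcal{C}^\infty$-module of $D$-tangent vector fields, so composing transition matrices around any triple intersection represents the identity of modules. The main obstacle I expect is the explicit push-forward of $\widetilde W$ and the invertibility of the resulting transition matrix at the center point $p_j$ itself, where all frame vectors vanish simultaneously and where the constant-matching conditions extracted in Section \ref{startfcns} are precisely what is needed to guarantee smoothness and nondegeneracy of the change-of-basis matrix.
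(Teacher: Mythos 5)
Your second paragraph and the start of your third contain the germ of a complete argument, and one that is genuinely different from the paper's: if the preceding characterization lemma is applied in each of the two overlapping star charts (restricted to the overlap $U\cap U'$, where its divisibility arguments still work), then $\{V,\widetilde W\}$ and $\{V',\widetilde W'\}$ are both bases of the \emph{same} $\mathcal{C}^\infty(U\cap U')$-module of vector fields tangent to $D$ (freeness follows from your dense-open-set observation), so the change of basis and its inverse are automatically smooth, and the cocycle condition on triple overlaps is free. The paper argues differently: it passes to the dual generators $V^\ast,\widetilde W^\ast$, computes their transitions explicitly, and uses the properties of Section \ref{startfcns} --- the constancy $f|_{\ell}=c\,g|_{\ell}$ on two distinct lines through the center, which is exactly where the degree-$3$ technical assumption enters --- to show that $\frac{df}{f}-\frac{dg}{g}$ vanishes at the center, so that the singular generator is preserved there. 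Your module-theoretic route, if completed, would bypass that computation entirely.

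However, as written there is a genuine gap, and it is the one you flag yourself in the closing paragraph. Having declared that ``the substantive step'' is the explicit computation of the transition matrix, you carry it out only for $V$; for $\widetilde W$ you assert, without computation, that the constraints of Section \ref{startfcns} ``force'' the product factor to transform correctly and ``ensure'' a nonzero coefficient on $\widetilde W'$ at $p_j$, and you end by conceding that precisely this point --- smoothness and nondegeneracy of the change of basis at $p_j$, where both frame fields vanish --- is unresolved. That is the entire content of the lemma; everything away from $p_j$ is routine. You must either (a) commit to the module argument and justify it fully (the previous lemma holds on the overlap, both frames generate the same module of $D$-tangent fields, freeness gives mutually inverse smooth matrices), in which case the explicit push-forwards become unnecessary; or (b) actually perform the push-forward of $\widetilde W$ (equivalently the paper's computation with $\widetilde W^\ast$), using $f|_{\ell}=c\,g|_{\ell}$ on two distinct lines through the center to control the error term $\frac{df}{f}-\frac{dg}{g}$ at $p_j$. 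As submitted, the proposal does neither. A smaller point: on overlaps with charts contained in $\widetilde S$, the factor $\prod_{i\geq 3}(A_ix+B_iy)$ \emph{does} vanish along $c_3,\dots,c_k$ away from $p_j$, so ``nonvanishing off the divisor lines'' does not by itself reduce matters to the standard $b$-transition; at such points one again needs to compare frames of the module of fields tangent to the single curve through the point.
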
 

\begin{proof} Assume we are considering a chart $U$ centered at a point $p$ where at least three curves intersect. Let $x$, $y$, and $ax+by$ define  the first three of these curves. As discussed in Section \ref{startfcns}, when transitioning from $x$ and $y$ to a chart $T$ with $\widetilde{x}$ and $\widetilde{y}$, we have the following relations: $\widetilde{x}=fx$, $\widetilde{y}=gy$, and $A\widetilde{x}+B\widetilde{y}=h(ax+by)$ for some nowhere vanishing functions $f,g,h\in\mathcal{C}^{\infty}(U\cap T)$.  Further, $$f=(ax+by)\dfrac{M}{Ab}+\dfrac{aB}{Ab}g$$ for some smooth function $M\in\mathcal{C}^\infty(U\cap T)$. 

To check that $V$ and $\widetilde{W}$ are preserved under our transition functions, it is enough to show that the corresponding co-vectors $V^\ast$ and $\widetilde{W}^\ast$ are preserved using the same open cover and transition functions. That is, we need to check that we still have generators of the form $$V^\ast=\dfrac{dx}{ x}+\dfrac{dy}{y}\hspace{2ex}\text{ and }\hspace{2ex}\widetilde{W}^\ast=\prod_{i=3}^k\Big(A_ix+B_iy\Big)^{-1}\left(\dfrac{dx}{x}-\dfrac{dy}{y}\right)$$ when we transition from $x$, $y$ to $\widetilde{x}, \widetilde{y}$. 

We will first consider $V^\ast$. Note that $$\dfrac{d\widetilde{x}}{\widetilde{x}}+\dfrac{d\widetilde{y}}{\widetilde{y}}=\dfrac{dx}{x}+\dfrac{dy}{y}+\dfrac{df}{f}+\dfrac{dg}{g}.$$ Since $f$ and $g$ are nowhere vanishing functions, the fiber generated by $V^\ast$ expressed in $x$, $y$ maps to the fiber generated by $V^\ast$ expressed in $\widetilde{x}$, $\widetilde{y}$.  

We are left to consider $\widetilde{W}^\ast$. Note that $$\prod_{i=3}^k\Big(A_i\widetilde{x}+B_i\widetilde{y}\Big)^{-1}\left(\dfrac{d\widetilde{x}}{\widetilde{x}}-\dfrac{d\widetilde{y}}{\widetilde{y}}\right)=\prod_{i=3}^k\Big(A_ifx+B_igy\Big)^{-1}\left(\dfrac{dx}{x}-\dfrac{dy}{y}+\dfrac{df}{f}-\dfrac{dg}{g}\right).$$

Recall from Section \ref{startfcns} that at the origin $f=Cg$ for some constant $C$. Thus $$\widetilde{W}^\ast\bigg|_{(0,0)}=\left(\dfrac{1}{Cg}\right)^{k-3+1}\prod_{i=3}^k\Big(A_ix+B_iy\Big)^{-1}\left(\dfrac{dx}{x}-\dfrac{dy}{y}+\dfrac{df}{f}-\dfrac{dg}{g}\right)\bigg|_{(0,0)}.$$ 

Since $g$ is nowhere vanishing, if we can show that $\dfrac{df}{f}-\dfrac{dg}{g}$ vanishes at the origin, we have shown the fiber generated by $\widetilde{W}^\ast$ expressed in $x,y$ is preserved under transition to the fiber generated by $\widetilde{W}^\ast$ expressed in $\widetilde{x},\widetilde{y}$. 

Let $\ell$ and $L$ be two distinct lines in $U\cap V$, intersecting at the origin, such that $f|_\ell=cg|_\ell$ and $f|_L=cg|_L$ for a real number $c$. The one-form $\dfrac{df}{f}-\dfrac{dg}{g}$ is zero when evaluated on $\ell$ and when evaluated on $L$; since $\ell$ and $L$ span the dual space at the origin, the one form must vanish at the origin.

To conclude, because we have local trivializations of our fibers with a suitable set of transition functions, by the fiber bundle construction theorem we have constructed a vector bundle over $(S, D)$.\end{proof} 

This vector bundle comes equipped with a Lie algebroid structure. The anchor map is evaluation in $TS$ and the Lie bracket is induced by the standard Lie bracket on $TS$. 

Let us consider an explicit example of a $b$-tangent bundle over a star divisor.

\begin{Example}\label{exb} Let $\mathbb{R}^2$ be equipped with the standard Euclidean coordinates \vspace{.4ex}

\noindent \begin{minipage}{.6\linewidth}and let star divisor $D$ consist of the lines $$\left\{x=0\right\},\left\{y=0\right\}, \left\{x+y=0\right\}, \text{ and }\left\{x-y=0\right\}.$$ The $b$-tangent bundle is generated by the vector fields $$x\dfrac{\partial}{\partial x}+y\dfrac{\partial}{\partial y}, (x-y)(x+y)\left(x\dfrac{\partial}{\partial x}-y\dfrac{\partial}{\partial y}\right) $$ and the $b$-cotangent bundle is generated by  $$\dfrac{dx}{x}+\dfrac{dy}{y}, \dfrac{1}{(x-y)(x+y)}\left(\dfrac{dx}{x}-\dfrac{dy}{y}\right).$$

\end{minipage} \begin{minipage}{.4\linewidth} \begin{center}\vspace{-2ex}

\hspace{3ex}\includegraphics[scale=1]{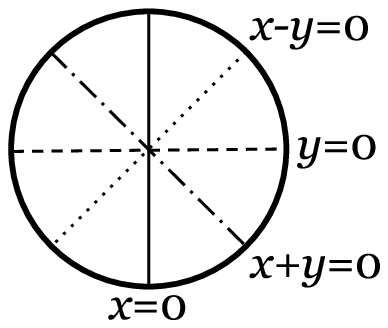}

{\bf divisor $D$}\end{center} \end{minipage}  
\end{Example}\vspace{1ex}

\begin{definition} A \emph{star log symplectic structure} on a surface $S$ with star divisor $D$ is a 2-form $\omega\in{}^{b}\Omega^2(S)=\mathcal{C}^\infty\big(S;\wedge^2\big({}^{b}T^*S\big)\big)$ satisfying $$d\omega=0\text{ and }\omega^n\neq 0.$$ The form $\omega$ induces a map $\omega^\flat$ between the $b$-tangent and $b$-cotangent bundles. \centerline{$\xygraph{!{<0cm,0cm>;<1cm, 0cm>:<0cm,1cm>::}
!{(1.25,0)}*+{{}^{b}TS}="b"
!{(4.9,0)}*+{{{}^{b}T^*S}}="c"
!{(1.75,.1)}*+{{}}="d"
!{(4.25,.1)}*+{{}}="e"
!{(1.75,-.1)}*+{{}}="f"
!{(4.25,-.1)}*+{{}}="g"
"d":"e"^{\omega^{\flat}}
"g":"f"^{\pi^{\sharp}=(\omega^{\flat})^{-1}}}$}

\noindent The inverse map is induced by a bi-vector $\pi\in\mathcal{C}^\infty(S;\wedge^2({}^{b}TS))$.  This bi-vector is called a \emph{log Poisson structure} on $(S,D)$.\end{definition} 

%\begin{Example} Continuing with the $b$-tangent bundle from Example \ref{exb}, the form $$\omega =  \dfrac{1}{2(x-y)(x+y)}\left(\dfrac{dx}{x}-\dfrac{dy}{y}\right)\wedge\left(\dfrac{dx}{x}+\dfrac{dy}{y}\right)= \dfrac{dx\wedge dy}{(x+y)(x-y)xy} $$ is a log-symplectic structure on $\mathbb{R}^2$. The associated log Poisson bi-vector is given by $$\pi=(x+y)(x-y)xy\dfrac{\partial}{\partial y}\wedge \dfrac{\partial}{\partial x}.$$\end{Example}
 
Next, we provide an explicit example of a star log symplectic structure on a compact surface. 

%\begin{Example} {\bf A quadratic log symplectic surface} 

%\vspace{-2ex} \begin{multicols}{2} 
%\begin{center}  $~$

%$(\mathbb{T}^2,\cup_{2}(\mathbb{S}^1\cup\mathbb{S}^1))$\vspace{2ex}

%\includegraphics[scale=1]{ClassPic03.eps}\end{center}\columnbreak 

%\noindent Consider the torus $\mathbb{T}^2$. Let $a_1,a_2$ be the angular coordinates on $\mathbb{T}^2$. We have a divisor $D$ consisting of the 2 hypersurfaces $$\left\{\sin(a_1)=0\right\}$$

%$$\text{ and }\left\{\sin(a_2)=0\right\}.$$ The form $$\omega = \dfrac{da_1\wedge da_2}{\sin(a_1)\sin(a_2)}$$  is a log symplectic structure on $\mathbb{T}^2$. 

%\end{multicols}
%\vspace{-2ex}\noindent The associated Poisson bivector is given by $$\pi =\sin(a_1)\sin(a_2)\dfrac{\partial}{\partial a_2}\wedge\dfrac{\partial}{\partial a_1}.$$

%\end{Example}

\newpage 
\begin{Example}\label{motivatingexample} {\bf A cubic star log symplectic surface.}

\vspace{-2ex}\begin{multicols}{2} \begin{center} $~$

  $(\mathbb{S}^2,\cup_{3}\mathbb{S}^1)$\vspace{2ex} 

\includegraphics[scale=1]{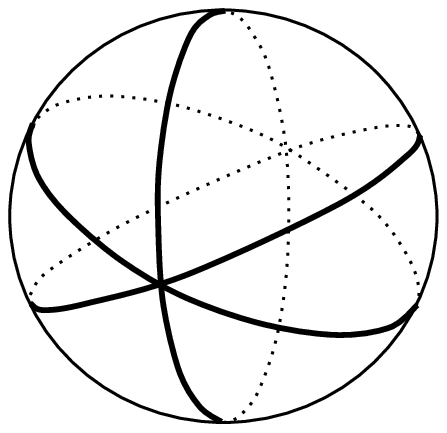}\end{center}

\columnbreak \noindent Consider the sphere  $\mathbb{S}^2$ with spherical coordinates $0\leq \theta < 2\pi$, $0\leq h\leq \pi$. We have a star divisor $D$ consisting of  $$\left\{\sin(\theta-\pi/6)=0\right\},$$
 $$\left\{\sin\left(\theta-\pi/2\right)=0\right\},\text{ and }$$  $$\left\{\sin\left(\theta-5\pi/6\right)=0\right\}.$$ The form $$ \omega = \dfrac{d\theta\wedge dh}{\sin(\theta-\frac{\pi}{6})\sin(\theta-\frac{\pi}{2})\sin(\theta-\frac{5\pi}{6})}$$  is a star log symplectic structure on $\mathbb{S}^2$. 
\end{multicols} \vspace{-2ex} \noindent The associated Poisson bivector is given by $$\pi =\sin(\theta-\frac{\pi}{6})\sin(\theta-\frac{\pi}{2})\sin(\theta-\frac{5\pi}{6}) \dfrac{\partial}{\partial h}\wedge\dfrac{\partial}{\partial \theta}.$$ \end{Example}

 \section{\bf Classification}\label{SectionClassification}  The first step in classifying star log symplectic surfaces is computing the Lie algebroid cohomology of the $b$-tangent bundle. 

\subsection{\bf $b$-de Rham cohomology} 

Given a surface $S$ with a collection of transverse curves $\left\{Z_1,\dots,Z_k\right\}$, the de Rham cohomology of the associated $b$-tangent bundle is  \begin{equation}\label{eq:1}{}^bH^p(S)\simeq H^p(S)\oplus \bigoplus_{i}H^{p-1}(Z_i)\oplus\bigoplus_{i<j}H^{p-2}(Z_i\cap Z_j),\end{equation} a fact originally pointed out in Appendix A.23 of \cite{Gualtieri02}. Remarkably, this decomposition still holds for the more general case of a surface with star divisor!  

\begin{theorem} \label{bstarcohom}
Let $(S,D)$ be a surface $S$ with star divisor $D=\left\{Z_1,\dots,Z_k\right\}$. The Lie algebroid cohomology of the b-tangent bundle over $(S,D)$ is given by (\ref{eq:1}). 
\end{theorem}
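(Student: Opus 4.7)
The strategy is a Mayer--Vietoris argument that localizes the computation at the points where three or more curves of $D$ meet, reducing the rest to the known normal-crossing formula in Appendix A.23 of \cite{Gualtieri02}. Let $\{p_1,\ldots,p_m\}\subset S$ denote the points of degree $\geq 3$, choose pairwise disjoint star-chart discs $U_j\ni p_j$, set $U=\bigsqcup_j U_j$, and let $V=S\setminus\{p_1,\ldots,p_m\}$. Then $(V,D\cap V)$ is a non-compact surface with normal-crossing divisor, so the formula (\ref{eq:1}) holds for ${}^bH^\bullet(V)$; and each $U_j\cap V$ is a punctured disc whose divisor is a disjoint union of $2k_j$ arcs (where $k_j=\deg p_j$), again a normal-crossing situation covered by the known formula.

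The crux is computing ${}^bH^\bullet(U_j)$ for a star-chart disc around a degree-$k$ point. Using the local $b$-coframe $V^*,\widetilde{W}^*$ dual to the generators of equation (\ref{eqvf}), every $b$-1-form on $U_j$ has the shape $fV^*+g\widetilde{W}^*$ and every $b$-2-form the shape $h\,V^*\wedge\widetilde{W}^*$. The plan is to prove directly that
$${}^bH^0(U_j)\cong\mathbb{R},\qquad {}^bH^1(U_j)\cong\mathbb{R}^{k},\qquad {}^bH^2(U_j)\cong\mathbb{R}^{\binom{k}{2}},$$
by defining residue functionals---one along each of the $k$ curves through $p_j$ in degree $1$, and one pairwise residue for each pair of those curves in degree $2$---and showing that the joint residue map is surjective with exact kernel, via a local $b$-Poincar\'e-type lemma obtained by antidifferentiating against $V^*$ and $\widetilde{W}^*$. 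Partial-fraction-style manipulations of $h(x,y)/\bigl(xy\prod_{i\geq 3}(A_ix+B_iy)\bigr)$ should furnish the required primitives for closed forms whose residues all vanish.

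With the local cohomology in hand, I would assemble the Mayer--Vietoris long exact sequence
$$\cdots\to{}^bH^p(S)\to{}^bH^p(U)\oplus{}^bH^p(V)\to{}^bH^p(U\cap V)\to{}^bH^{p+1}(S)\to\cdots$$
and check that it splits into the three summands of (\ref{eq:1}). The $H^p(S)$ summand comes from the ordinary de Rham Mayer--Vietoris; the $H^{p-1}(Z_i)$ summands assemble from residues along each curve (normal-crossing parts from $V$, star-point parts from the relevant $U_j$); and each $H^{p-2}(Z_i\cap Z_j)$ summand is built from pairwise residues at the normal-crossing intersection points of $Z_i\cap Z_j$ inside $V$ together with the pairwise residues at the star points lying on both $Z_i$ and $Z_j$. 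The disjointness of arcs on $U_j\cap V$ ensures the $H^{p-2}$ contributions do not cancel on overlaps.

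The main obstacle is the local calculation---specifically, showing that ${}^bH^2(U_j)\cong\mathbb{R}^{\binom{k}{2}}$ rather than some richer object. At a degree-$k$ star point one might naively expect contributions from triples, quadruples, etc.\ of curves, in analogy with the Orlik--Solomon algebra of a line arrangement; the reason only pairs appear is that ${}^b\Omega^2$ is generated by the single wedge $V^*\wedge\widetilde{W}^*$, so any putative ``higher-multiplicity'' residue must collapse onto the $\binom{k}{2}$ pairwise ones. Constructing explicit primitives that realize this collapse---for closed top forms all of whose pairwise residues vanish---is where the technical work concentrates and where I expect the proof to be most delicate.
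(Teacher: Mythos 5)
Your route (Mayer--Vietoris localized at the points of degree $\geq 3$, with the normal-crossing formula used on the complement) is genuinely different from the paper's, which never decomposes $S$: it filters the global complex ${}^b\Omega^\ast(S,D)$ by the number of divisor components, ${}^{v.s.}\mathrm{Span}\{{}^b\Omega^\ast(S,I):|I|=k\}$, shows each graded piece splits off as a direct summand of complexes, and proves the pieces with $k\geq 3$ are acyclic, so the spectral sequence degenerates and only the $k=1,2$ pieces survive. Your claimed local answers ${}^bH^0(U_j)\cong\mathbb{R}$, ${}^bH^1(U_j)\cong\mathbb{R}^{k}$, ${}^bH^2(U_j)\cong\mathbb{R}^{\binom{k}{2}}$ are consistent with the theorem, and the mechanism you name (higher-multiplicity singular $2$-forms must be exact) is exactly the right one.

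The genuine gap is that you never prove that mechanism, and it is precisely the mathematical content of the theorem; everything else in your outline is bookkeeping. The missing step is an explicit computation: with $R=\prod_{i=3}^{k}(A_ix+B_iy)$, one checks
$$d\left[\frac{1}{R}\left(\frac{dx}{x}-\frac{dy}{y}\right)\right]=(k-2)\,\frac{dx\wedge dy}{xyR}+(\text{terms singular along fewer than }k\text{ lines}),$$
which is the paper's equation for $dP(\mu)$ in its filtration argument. For $k\geq 3$ the factor $k-2$ is nonzero, so the maximally singular $2$-form at a degree-$k$ point is exact modulo less singular terms, and iterating downwards collapses everything onto the pairwise classes; for $k=2$ the factor vanishes, which is exactly why the $\binom{k}{2}$ pairwise residues survive. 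Your proposal defers this (``partial-fraction manipulations should furnish the required primitives''), so as written it asserts the conclusion of the hard step rather than establishing it; to complete your argument you would carry out this primitive construction in the star chart, and additionally (i) justify the Mayer--Vietoris sequence for ${}^b$-forms (fine sheaf, so routine), (ii) note that the normal-crossing decomposition you invoke on $V$ and $U_j\cap V$ is being applied to non-compact surfaces whose divisor components are open arcs, so the summands are $H^{\ast}(Z_i\cap V)$ rather than $H^{\ast}(Z_i)$, and (iii) actually trace the connecting homomorphisms to reassemble $\bigoplus_i H^{p-1}(Z_i)$ and $\bigoplus_{i<j}H^{p-2}(Z_i\cap Z_j)$, rather than asserting that the long exact sequence ``splits into the three summands.''
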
 

\begin{proof}

We will prove Theorem \ref{bstarcohom} using a rather simple spectral sequence, see Chapter 1 of \cite{McCleary} for a friendly introduction. \vspace{1ex}

\noindent {\bf Constructing a filtration $\mathcal{F}$.} 

Let $(S,D)$ be a surface $S$ with star divisor $D$. Given a subset $I\subseteq D$, let ${}^b\Omega^\ast(S,I)$ denote the complex of Lie algebroid forms of the b-tangent bundle over $(S,I)$. In this notation, we want to compute the cohomology of the complex ${}^b\Omega^\ast(S,D)$. 

We will filter the differential complex $({}^b\Omega^\ast(S),d)$ using a filtration denoted $$\mathcal{F}=\left\{\Omega^\ast(S)=\mathscr{F}^\ast_0\subseteq \mathscr{F}^\ast_1\subseteq \mathscr{F}^\ast_2\subseteq\dots\subseteq \mathscr{F}^\ast_{|D|-1}\subseteq \mathscr{F}^\ast_{|D|}={}^b\Omega^\ast(S,D)\right\}$$
 where $$\mathscr{F}^*_k:={}^{v.s.}\text{Span}\left\{\bigcup_{\text{\scalebox{.8}{$I\subseteq D,|I|=k$}}}\hspace{-1ex}{}^b\Omega^\ast(S,I)\right\}.$$ Notice that $\mathscr{F}^\ast_k\subseteq \mathscr{F}^\ast_{k+1}$ for all $k$. Each set $\mathscr{F}^p_k$ is a group under addition and has the structure of a module with scalars from the ring $$\mathscr{F}^{0}_k=\mathcal{C}^\infty(S).$$ 

Note that ${}^b\Omega^\ast(S,I)\subseteq {}^b\Omega^\ast(S,D)$ for all $I\subseteq D$. Consequently, the modules $\mathscr{F}_k^\ast\subseteq {}^b\Omega^\ast(S,D)$ for all $k$. It is easy to check that $\mathscr{F}_k^\ast$ is closed under the differential of ${}^b\Omega^\ast(S,D)$. Thus we can define a differential $\mathscr{F}_k^p\xrightarrow{~d~}\mathscr{F}_k^{p+1}$ inherited from the differential on ${}^b\Omega^\ast(S,D)$. 

For each $k$, we have a chain complex: $$0\to \mathscr{F}^0_k\xrightarrow{~d~}\mathscr{F}^1_k\xrightarrow{~d~}\mathscr{F}^2_k\to 0.$$    \vspace{1ex}

\noindent {\bf The associated graded complex of $\mathcal{F}$.}

Next we will consider the associated graded complex of our filtered complex $({}^b\Omega^\ast(S),d, \mathcal{F})$. We denote the associated graded module $\mathscr{F}^p_k/\mathscr{F}^p_{k-1}$ by $E^{p,k}_0$. 

The inclusions $\mathscr{F}^\ast_{k-1}\to \mathscr{F}^\ast_{k}$ of cochain complexes from filtration $\mathcal{F}$ fit into a series of short exact sequence of complexes \begin{center} $0\to \mathscr{F}^\ast_{k-1}\to \mathscr{F}^\ast_{k}\to E^{\ast,k}_0\to 0$. \end{center} The differential $^{E}d$ on $E^{p,k}_0$ is induced by $d$ on $\mathscr{F}^p_{k}$: if $P$ is the projection $\mathscr{F}^p_k\to\mathscr{F}^p_k/\mathscr{F}^p_{k-1},$ then  $^{E}d(\eta)=P(d(\theta))$ where  $\theta\in\mathscr{F}^p_k$ is any form such that $P(\theta)=\eta$. Hence $({}^Ed)^2=0$ and $(E^{\ast,k}_0,{}^Ed)$ is a complex. \vspace{2ex}

The associated graded complex of $({}^b\Omega^\ast(S),d)$ is $$\bigoplus_{k=1}^m E^{\ast,k}_0.$$ The following computations will show that the spectral sequence collapses at the first page and $$ \bigoplus_{k=1}^m H^d(E^{\ast,k}_0)\simeq{}^bH^d(S), $$ or that the cohomology of the associated graded complex computes the cohomology of $({}^b\Omega^\ast(S),d)$. \vspace{2ex}

\noindent {\bf Computing $H^p(\mathscr{F}^\ast_1)$.}

Let $k=1$ and consider the short exact sequence $$0\to \mathscr{F}^\ast_{0}\to \mathscr{F}^\ast_{1}\to E^{\ast,1}_0\to 0$$ defined above. Note that $\mathscr{F}^\ast_0=\Omega^\ast(S)$. 

Let $D=\left\{Z_1,Z_2,\dots,Z_\ell\right\}$ be our star divisor and consider defining functions $x_1,x_2,\dots,x_\ell$ such that $Z_i=\left\{x_i=0\right\}$. For each $i$, let $\chi_{i}:S\to\mathbb{R}$ be a smooth bump function  supported in a neighborhood of $Z_i$ and with constant value one at $Z_i$. 

An element $\mu \in \mathscr{F}_1^p$ can be expressed as $$\mu=\sum_{i=1}^\ell \chi_i\cdot \dfrac{dx_i}{x_i}\wedge\alpha_i+\beta$$ where $\alpha_i\in\Omega^{p-1}(Z_i)$ and $\beta\in\Omega^p(S)$.

Given $\mu\in \mathscr{F}_1^p$, we write $\mathfrak{R}(\mu)=\beta$ and $\mathfrak{S}(\mu)=\mu-\mathfrak{R}(\mu)$ for `regular' and `singular' parts. It is easy to see that $\mathfrak{R}(d\mu)=d(\mathfrak{R}(\mu))$ and $\mathfrak{S}(d\mu)=d(\mathfrak{S}(\mu))$. Thus we have a  splitting $$\mathscr{F}^\ast_1=\Omega^*(S)\oplus E^{\ast,1}_0$$ as complexes and $H^p(\mathscr{F}^\ast_1)\simeq H^p(S)\oplus  H^p(E^{\ast,1}_0)$. We are left to compute the cohomology of the complex $E^{\ast,1}_0$. 

As mentioned above, $P(\mu)\in E^{\ast,1}_0$ is of the form $$P(\mu)=\sum_{i=1}^\ell \dfrac{dx_i}{x_i}\wedge\alpha_i$$ for $\alpha_i\in\Omega^{p-1}(Z_i)$. Then $$dP(\mu)=-\sum_{i=1}^\ell \dfrac{dx_i}{x_i}\wedge d\alpha_i.$$ 

Note that as an element of $\mathscr{F}^\ast_1$, $$\mu|_{Z_i}=\dfrac{dx_i}{x_i}\wedge \alpha_i.$$ Thus, the condition that $\mu|_{Z_i}=0$ implies that $\alpha_i=0$. Consequently, given these choices of $Z_i$ defining functions $x_1,\dots,x_\ell$, the set $\ker (d:E^{p,1}_0\to E^{p+1,1}_0)$ can be identified with $$\bigoplus_{i=1}^\ell \left\{\alpha_i\in\Omega^{p-1}(Z_i):d\alpha_i=0\right\}.$$ Further, the set $\text{im} (d:E^{p-1,1}_0\to E^{p,1}_0)$ can be identified with $$\bigoplus_{i=1}^\ell \left\{\alpha_i\in\Omega^{p-1}(Z_i):\alpha_i=d\gamma_i, \gamma_i\in\Omega^{p-2}(Z_i)\right\}.$$ 

For completeness, we next consider change of $Z_i$ defining function. However, by a computation which can be found in example 2.11 of \cite{Lanius}, these sets are invariant under change of $Z_i$ defining functions. Thus $$H^p(E^{\ast,1}_0)\simeq \bigoplus_i H^{p-1}(Z_i)\text{ and }H^p(\mathscr{F}^\ast_1)\simeq H^p(S)\oplus\bigoplus_i H^{p-1}(Z_i).$$

\vspace{2ex}

\noindent {\bf Computing $H^p(\mathscr{F}^\ast_2)$.} 

Let $k=2$ and consider the short exact sequence $$0\to \mathscr{F}^\ast_{1}\to \mathscr{F}^\ast_{2}\to E^{\ast,2}_0\to 0.$$

Let $D=\left\{Z_1,Z_2,\dots,Z_\ell\right\}$ be our star divisor. For each pair of curves $Z_i, Z_j$ with $i<j$, there is a pair of tubular neighborhoods $\tau_i=Z_i\times(-\varepsilon, \varepsilon)_{x_{ij}}$ of $Z_i$ and $\tau_j=Z_j\times(-\varepsilon, \varepsilon)_{y_{ij}}$ of $Z_j$ such that $$\left[\dfrac{\partial}{\partial x_{ij}},\dfrac{\partial}{\partial y_{ij}}\right]=0.$$ For existence of such neighborhoods see, for instance, section 5 of \cite{Albin}.  

We begin by noting that $\mathscr{F}_2^1=\mathscr{F}_1^1$.

Let $\chi_{i,j}$ be a smooth bump function supported near $Z_i\cup Z_j$ and with constant value one near $Z_i\cup Z_j$. An element $\mu \in \mathscr{F}_2^2$ can be expressed as $$\mu=\sum_{i<j}\chi_{ij}\cdot \dfrac{dx_{ij}}{x_{ij}}\wedge\dfrac{dy_{ij}}{y_{ij}} \alpha_{ij}+\beta$$ where $\alpha_{ij}\in\Omega^{0}(Z_i\cap Z_j)$ and $\beta\in\mathscr{F}_1^2$.

Because $\mathscr{F}_1^\ast$ is closed under $d$ and $d\mu=0$ for all $\mu\in\mathscr{F}_2^2$, we have a  splitting $$\mathscr{F}^\ast_2=\mathscr{F}^\ast_1\oplus E^{\ast,2}_0$$ as complexes and $H^p(\mathscr{F}^\ast_2)\simeq H^p(\mathscr{F}^\ast_1)\oplus  H^p( E^{\ast,2}_0)$. We are left to compute the cohomology group of the complex $E^{\ast,2}_0$. In this instance, $ E^{1,2}_0=0$ and we are computing the cohomology of the sequence $0\to  E^{2,2}_0\to 0$.  Equivalently, we are left to identify the set $E^{2,2}_0$. 

Any $P(\mu)\in E^{2,2}_0$ is of the form $$P(\mu)=\sum_{i<j} \dfrac{dx_{ij}}{x_{ij}}\wedge\dfrac{dy_{ij}}{y_{ij}} \alpha_{ij}$$ for $\alpha_{ij}\in\Omega^{0}(Z_i\cap Z_j)$. In other words, each $\alpha_{ij}$ is a function defined on a set of discrete points. 

Then the set $E^{2,2}_0$ can be identified with $$\bigoplus_{i<j} \left\{\alpha_{ij}\in\Omega^{p-2}(Z_i\cap Z_j)\right\}.$$ By the computation which can be found in example 2.11 of \cite{Lanius}, these sets are invariant under change of $Z_i$ and $Z_j$ defining functions. Thus $$H^p(E^{\ast,2}_0)\simeq \bigoplus_{i<j} H^{p-2}(Z_i\cap Z_j)$$ and $$H^p(\mathscr{F}^\ast_2)\simeq H^p(S)\oplus\bigoplus_i H^{p-1}(Z_i)\oplus\bigoplus_{i<j} H^{p-2}(Z_i\cap Z_j).$$

\vspace{2ex}

In our final step, we will show that the cohomology of $E^{\ast,k}_0$ vanishes for $k\geq 3$, and thus $H^p(\mathscr{F}_2^\ast)$ actually computes ${}^bH^p(S)$.    

\vspace{1ex}

\noindent {\bf Computing $H^p(\mathscr{F}^\ast_k)$ for $k\geq 3$.}

Fix $k\geq 3$ and consider the short exact sequence $$0\to \mathscr{F}^\ast_{k-1}\to \mathscr{F}^\ast_{k}\to E^{\ast,k}_0\to 0.$$

Given any $p\in S$ such that at least the $k$ curves in the set $I=\left\{Z_1,\dots,Z_k\right\}$ intersect at $p$, there exists a star chart $(U_{p,I},x,y)$ centered at $p$ such that $$Z_1=\left\{x=0\right\},Z_2=\left\{y=0\right\},\dots,Z_k=\left\{A_kx+B_ky=0\right\}.$$  

Let $\chi_{p,I}$ be a smooth bump function  supported in $U_{p,I}$ and with constant value one in a neighborhood of $p$. 

 Any element $\mu \in \mathscr{F}_k^1$ can be expressed as  $$\mu=\sum_{\begin{array}{c} p, I\\ \scalebox{.9}{|I|=k}\end{array}}\chi_{p,I}\cdot\prod_{i=3}^k\dfrac{1}{(A_ix+B_iy)}\left(\dfrac{dx}{x}-\dfrac{dy}{y}\right)\wedge\alpha_{p,I}+\beta $$ for $\alpha_{p,I}\in\Omega^{0}(\left\{p\right\})$ and $\beta\in\mathscr{F}_{k-1}^1$. Any element $\mu \in \mathscr{F}_k^2$ can be expressed as  $$\mu=\sum_{\begin{array}{c} p, I\\ \scalebox{.9}{|I|=k}\end{array}}\chi_{p,I}\cdot\prod_{i=3}^k\dfrac{1}{(A_ix+B_iy)}\dfrac{dx\wedge dy}{xy}\wedge\alpha_{p,I}+\beta $$ for $\alpha_{p,I}\in\Omega^{0}(\left\{p\right\})$ and $\beta\in\mathscr{F}_{k-1}^2$. Note that each $\alpha_{p,I}$ is simply a real number. 

Given $\mu\in \mathscr{F}_k^\ell$, we write $\mathfrak{R}(\mu)=\beta$ and $\mathfrak{S}(\mu)=\mu-\mathfrak{R}(\mu)$ for `regular' and `singular' parts. One can check that $\mathfrak{R}(d\mu)=d(\mathfrak{R}(\mu))$ and $\mathfrak{S}(d\mu)=d(\mathfrak{S}(\mu))$. Thus we have a  splitting $$\mathscr{F}^\ast_k=\mathscr{F}^\ast_{k-1}\oplus E^{\ast,k}_0$$ as complexes and $H^p(\mathscr{F}^\ast_k)\simeq H^p(\mathscr{F}^\ast_{k-1})\oplus  H^p(E^{\ast,k}_0)$. We are left to compute the cohomology group of the  complex $E^{\ast,k}_0$. 

We will first compute the cohomology at $E^{1,k}_0$. 
Any $P(\mu)\in E^{1,k}_0$ is of the form $$P(\mu)=\sum_{\begin{array}{c} p, I\\ \scalebox{.9}{|I|=k}\end{array}}\chi_{p,I}\cdot\prod_{i=3}^k\dfrac{1}{(A_ix+B_iy)}\left(\dfrac{dx}{x}-\dfrac{dy}{y}\right)\wedge\alpha_{p,I}$$ for $\alpha_{p,I}\in\Omega^{0}(\left\{p\right\})$. Then, as an element of $E^{\ast,k}_0$, \begin{equation}\label{eq:2} dP(\mu)= \sum_{\begin{array}{c} p, I\\ \scalebox{.9}{|I|=k}\end{array}}\chi_{p,I}\cdot\prod_{i=3}^k\dfrac{1}{(A_ix+B_iy)}\dfrac{dx\wedge dy}{xy}\wedge(k-2)\alpha_{p,I}.\end{equation}

Thus $\ker (d:E^{1,k}_0\to E^{2,k}_0)=\left\{\alpha_{p,I}=0\right\}$ and $H^1(E^{\ast,k}_0)=0$ for all $k\geq 3$. \vspace{1ex}

Next, we compute the cohomology at $E^{2,k}_0$. Any element  $P(\mu)\in E^{2,k}_0$ is of the form $$P(\mu)=\sum_{\begin{array}{c} p, I\\ \scalebox{.9}{|I|=k}\end{array}}\chi_{p,I}\cdot\prod_{i=3}^k\dfrac{1}{(A_ix+B_iy)}\dfrac{dx\wedge dy}{xy}\wedge\alpha_{p,I}$$ for $\alpha_{p,I}\in\Omega^{0}(\left\{p\right\})$. We have $dP(\mu)=0$ so $\ker (d:E^{2,k}_0\to 0)=E^{2,k}_0$. By equation (\ref{eq:2}), $\text{im}(d:E^{1,k}_0\to E^{2,k}_0)=E^{2,k}_0$. As desired, we have shown that $H^2(E^{\ast,k}_0)=0$ for $k\geq 3$ and have completed the proof of the theorem.  \end{proof}

\subsection{\bf A log Darboux theorem} 

Darboux's theorem gives us a local description of Log Poisson structures which enables us to identify the rigged algebroid and compute Poisson cohomology. We establish this type of normal form theorem using a Moser-type argument in a neighborhood of $p\in D$. Let $(S,D)$ be a surface $S$ with star divisor $D$ and assume $p\in D$ has degree $k$. In a star chart centered at $p$, consider the star log symplectic form

\begin{equation}\label{eq:normal} \omega_0=  \left(\dfrac{1}{xy}\prod_{i=3}^k\dfrac{1}{(A_ix+B_iy)}+P\right) dx\wedge dy \end{equation} where $$P=\sum_{i}\dfrac{\lambda_i}{\text{\scalebox{.9}{$x(A_ix+B_iy)$}}}+\dfrac{\delta_i}{\text{\scalebox{.9}{$y(A_ix+B_iy)$}}}+\sum_{i<j}\dfrac{\nu_{ij}}{\text{\scalebox{.9}{$(A_ix+B_iy)(A_jx+B_jy)$}}}$$ for real numbers $\lambda_i, \delta_i, \nu_{ij}$.  

\begin{proposition}\label{prop01} Let $\omega$ be a star log symplectic form on $(S,D)$. Given a degree $k$ point $p\in D$, there exists a neighborhood $U$ of $p$ such that on $U$ there is a b-symplectomorphism pulling $\omega$ back to  (\ref{eq:normal}). 
\end{proposition}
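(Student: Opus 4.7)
My plan is to prove Proposition \ref{prop01} by a Moser-style deformation argument carried out in the $b$-category. Fix a star chart $(U,(x,y))$ centered at $p$, shrunk so that no curves of $D$ other than the $k$ curves through $p$ meet $U$. In this chart the frame $V^{\ast}=\tfrac{dx}{x}+\tfrac{dy}{y}$ and $\widetilde{W}^{\ast}$ from Section \ref{section2} trivializes $\wedge^{2}\,{}^{b}T^{\ast}U$, and any star log symplectic form is a nowhere-vanishing multiple of $V^{\ast}\wedge \widetilde{W}^{\ast}$, so the space of candidate $b$-symplectic forms on $U$ is convex on nondegeneracy.

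The first step is cohomological. The spectral sequence used in the proof of Theorem \ref{bstarcohom} applies verbatim to the open surface $(U,D\cap U)$, and because $U$, each $Z_{i}\cap U$ and all triple intersections are contractible, the only surviving summands are
\[
{}^{b}H^{2}(U)\;\simeq\;\bigoplus_{i<j}H^{0}(Z_{i}\cap Z_{j}\cap U)\;\cong\;\mathbb{R}^{\binom{k}{2}}.
\]
I then verify that the terms appearing in the normal form (\ref{eq:normal}) give an explicit basis of representatives for this space: the leading summand $\tfrac{dx\wedge dy}{xy\prod(A_{i}x+B_{i}y)}$ accounts for the class of the pair $(Z_{1},Z_{2})$; each of the $k-2$ coefficients $\lambda_{i}$ (resp.\ $\delta_{i}$) accounts for the pair $(Z_{1},Z_{i})$ (resp.\ $(Z_{2},Z_{i})$); and each $\nu_{ij}$ accounts for the pair $(Z_{i},Z_{j})$ with $3\leq i<j\leq k$. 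The parameter count $1+2(k-2)+\binom{k-2}{2}=\binom{k}{2}$ confirms that these closed $b$-2-forms span ${}^{b}H^{2}(U)$.

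Given an arbitrary star log symplectic $\omega$ on $U$, I choose the real constants $\lambda_{i},\delta_{i},\nu_{ij}$ so that the resulting $\omega_{0}$ from (\ref{eq:normal}) has the same class as $\omega$ in ${}^{b}H^{2}(U)$; concretely, this amounts to reading off the pairwise residues of $\omega$ at each intersection and matching them. After possibly shrinking $U$, select $\beta\in{}^{b}\Omega^{1}(U)$ with $\omega-\omega_{0}=d\beta$. Because $b$-nondegeneracy is an open condition, a further shrinking of $U$ ensures the linear path $\omega_{s}=\omega_{0}+s(\omega-\omega_{0})$, $s\in[0,1]$, is $b$-symplectic for every $s$, so $\omega_{s}^{\flat}\colon{}^{b}TU\to{}^{b}T^{\ast}U$ is invertible. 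Solve $\iota_{X_{s}}\omega_{s}=-\beta$ for a time-dependent $b$-vector field $X_{s}$, integrate to obtain an isotopy $\phi_{s}$ of $b$-diffeomorphisms with $\phi_{0}=\mathrm{id}$, and compute $\tfrac{d}{ds}(\phi_{s}^{\ast}\omega_{s})=\phi_{s}^{\ast}(\mathcal{L}_{X_{s}}\omega_{s}+d\beta)=0$, whence $\phi_{1}^{\ast}\omega=\omega_{0}$.

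The principal obstacle is the cohomological normalization. I must check both that the explicit rational 2-forms entering $P$ are genuine closed $b$-2-forms representing nontrivial classes in the pairwise-intersection summands (not merely smooth forms on $U\setminus D$), and that the $\binom{k}{2}$ constants exactly exhaust the freedom in the local $b$-cohomology class of $\omega$. A companion point is confirming that the primitive $\beta$ can be chosen in ${}^{b}\Omega^{1}(U)$, i.e.\ tangent to every $Z_{i}$, so that $X_{s}\in\Gamma({}^{b}TU)$ and the time-one flow $\phi_{1}$ is an honest $b$-symplectomorphism preserving $D$ setwise rather than a mere diffeomorphism of $U\setminus D$.
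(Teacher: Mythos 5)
Your overall strategy (match the class in ${}^{b}H^{2}(U)$, take a primitive, run a Moser isotopy by $b$-vector fields) is the same as the paper's, but there is a genuine gap at the step where you claim that ``because $b$-nondegeneracy is an open condition, a further shrinking of $U$ ensures the linear path $\omega_{s}=\omega_{0}+s(\omega-\omega_{0})$ is $b$-symplectic for every $s$.'' Openness only helps away from the point where degeneracy actually occurs, and here the path can degenerate at $p$ itself, in which case no shrinking saves you. Concretely, $\wedge^{2}\,{}^{b}T^{\ast}U$ is a line bundle and a star log symplectic form is a nowhere-vanishing section of it, so the set of such forms is the union of two convex cones, not a convex set (your remark that the space is ``convex on nondegeneracy'' is false as stated). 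If the maximally singular coefficient of $\omega$ at $p$ (its value as a section of $\wedge^{2}\,{}^{b}T^{\ast}_{p}U$) has the opposite sign from that of $\omega_{0}$, then $\omega_{s}(p)=0$ for some $s\in(0,1)$ and the Moser vector field $X_{s}$ cannot be solved for. Crucially, matching $b$-cohomology classes does \emph{not} rule this out: your own computation shows ${}^{b}H^{2}(U)$ only records the pairwise residues, and the vanishing of $H^{2}(E^{\ast,k}_{0})$ for $k\geq 3$ in Theorem \ref{bstarcohom} means the leading (maximally singular) coefficient at $p$ is invisible to the class. So two cohomologous forms on $U$ can have leading coefficients $+1$ and $-1$ at $p$, and for these your linear path fails.

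This is exactly what the paper's proof supplies and yours omits: one first chooses the star chart (and the sign $h=\pm 1$) so that $\omega$ and the normal form agree at $p$ as $b$-forms --- equivalently, so that they induce the same orientation of ${}^{b}TU$ --- and one observes that the exact correction $\nu=\omega-\omega_{0}$ satisfies $\nu|_{p}=0$ because it is not maximally singular. With $\omega(p)=\omega_{0}(p)\neq 0$, the convex combination is automatically nondegenerate at $p$ for all $t$, and then a compactness/shrinking argument gives nondegeneracy on a neighborhood; this is the relative Moser argument. To repair your proof you must add this normalization of the leading coefficient (its magnitude can be scaled to $1$ by a coordinate dilation when $k\geq 3$, but its sign is an invariant of the orientation of ${}^{b}TU$), and you should also actually verify, rather than only flag, that your $\binom{k}{2}$ rational $2$-forms are closed $b$-forms whose classes span ${}^{b}H^{2}(U)$; note that the leading term is not simply a representative of the $(Z_{1},Z_{2})$ summand, since its image in the top graded piece is exact and its class is spread among the pairwise residues.
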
 

\begin{proof} 
 Let $(S,D)$ be a surface $S$ with star divisor $D$ and assume $p\in D$ has degree $k$. Let $\omega$ be a log-symplectic structure on $S$. The proof of Theorem \ref{bstarcohom} tells us that at any intersection point $p\in D$, there exists a star chart $U$ such that $\omega$ is expressible as $\omega_0+\nu$ where $\nu\in{}^b\Omega^2(U)$ is an exact log 2-form and $\omega_0$ is the log 2-form given by equation (\ref{eq:normal}). Further,  $$\omega|_p= \dfrac{1}{xy}\prod_{i=3}^k\dfrac{1}{(A_ix+B_iy)}dx\wedge dy$$ and  $\nu|_p=0$. We have that $$\omega-\omega_0=\nu=d\gamma$$ for some $\gamma\in{}^b\Omega^1(U)$. Note that $\gamma$ is not a primitive for any terms of the form  $$\dfrac{1}{xy}\prod_{i=3}^k\dfrac{c}{(A_ix+B_iy)}dx\wedge dy$$ for $c\in\mathbb{R}$. Consequently, when considered as a b-form, $\gamma$ vanishes at the point $p$ because it is not `maximally singular'. 
 
Now we will proceed by the standard relative Moser argument (See \cite{Cannas} Sec. 7.3 for the smooth setting, and \cite{Scott} Thm 6.4 for the $b$-symplectic version). Choose real numbers $\lambda_i,\delta_i,\nu_{ij}$ so that $\omega$ and $\omega_0$ are cohomologous in the Lie algebroid cohomology of the $b$-tangent bundle. Further, if $c>0$, choose $h=1$. If $c<0$, choose $h=-1$. This second condition means that $\omega$ and $\omega_0$ induce the same orientation of the $b$-tangent bundle. It is necessary to ensure that the convex combination $\omega_t=(1-t)\omega_0+t\omega$ is non-degenerate for all time $t$. Then $\dfrac{d\omega_t}{d t}=\omega-\omega_0=d\gamma.$  Because $\gamma$ is a log one form, the vector field defined by $i_{v_t}\omega_t=-\gamma$ is a log vector field and its flow fixes the divisor $D\cap U$. Thus we can integrate $v_t$ to an isotopy that fixes $D\cap U$. This isotopy is the desired $\log$-symplectomorphism. \end{proof} 

%The details proving this proposition can be found in \cite{LaniusPartitionable} Prop. 2.2 where we establish a version for log-symplectic structures over a normal crossing divisor. The argument is also quite similar to the standard relative Moser argument which can be found in \cite{Cannas} Sec. 7.3 for the smooth setting or \cite{Scott} Thm. 6.4 for the $b$-symplectic version.   

\subsection{\bf Global Moser for star log symplectic manifolds}

\begin{proposition}\label{globalmoser} Let $S$ be a compact surface with a star divisor $D$ and let $\omega_0$ and $\omega_1$ be two star log symplectic forms on $(S,D)$. Suppose that there is a family of star log symplectic forms $\omega_t$ from $\omega_0$ to $\omega_1$ defined for $0\leq t\leq 1$. If the $b$-cohomology class $[\omega_t]$ is independent of $t$, then there exists a family of diffeomorphisms $$\gamma_t:S\to S\text{ for }0\leq t\leq 1$$ such that $$ \gamma_t|_D\text{ is the identity map on }D\text{ and }\gamma_t^*\omega_t=\omega_0.$$ \end{proposition}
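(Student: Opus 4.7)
The plan is to carry out the classical Moser deformation argument entirely in the $b$-category: use the hypothesis that $[\omega_t]\in{}^bH^2(S)$ is constant to produce a time-dependent $b$-vector field $v_t$, integrate it to an isotopy of $S$, and verify that pullback along this isotopy trivializes the family $\omega_t$.

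First, set $\dot\omega_t := \tfrac{d}{dt}\omega_t \in {}^b\Omega^2(S)$, a smooth family of closed $b$-$2$-forms whose class in ${}^bH^2(S)$ is identically zero. The main technical point is to produce primitives $\alpha_t \in {}^b\Omega^1(S)$ with $d\alpha_t = \dot\omega_t$ that depend smoothly on $t$. I would do this by exhibiting a linear cochain homotopy $h$ with $d\circ h = \mathrm{id}$ on $b$-exact $2$-forms and then setting $\alpha_t := h(\dot\omega_t)$. Such an $h$ can be assembled from the splitting constructed in the proof of Theorem \ref{bstarcohom}: on each associated graded piece $E_0^{\ast,k}$, the differential admits an explicit section — visible for $k\ge 3$ from equation (\ref{eq:2}), where $d$ multiplies the coefficient at each intersection point by the non-zero constant $k-2$ — and the regular part of a $b$-exact form admits a smooth primitive by the usual partition-of-unity construction on the compact surface $S$.

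Once such a family $\alpha_t$ is in hand, use that $\omega_t$ is non-degenerate as a $b$-form to define $v_t \in \Gamma({}^bTS)$ by
\begin{equation*}
i_{v_t}\omega_t = -\alpha_t.
\end{equation*}
Viewed through the anchor map ${}^bTS \to TS$, the vector field $v_t$ is tangent to every curve of $D$ and vanishes at every intersection point of degree $\ge 2$. Compactness of $S$ ensures that the non-autonomous ODE $\tfrac{d}{dt}\gamma_t = v_t \circ \gamma_t$ with $\gamma_0 = \mathrm{id}_S$ integrates to an isotopy $\gamma_t:S\to S$ defined for all $t\in[0,1]$, and this isotopy preserves $D$ in the sense of the statement, fixing each intersection point and leaving each curve invariant.

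The Moser computation then closes the argument: using that $\omega_t$ is $d$-closed and that $\mathcal{L}_{v_t} = d\,i_{v_t} + i_{v_t}\,d$ at the level of $b$-de Rham forms,
\begin{equation*}
\tfrac{d}{dt}\bigl(\gamma_t^*\omega_t\bigr) \;=\; \gamma_t^*\bigl(\mathcal{L}_{v_t}\omega_t + \dot\omega_t\bigr) \;=\; \gamma_t^*\bigl(d\,i_{v_t}\omega_t + d\alpha_t\bigr) \;=\; \gamma_t^*\bigl(-d\alpha_t + d\alpha_t\bigr) \;=\; 0.
\end{equation*}
Integrating from $0$ to $1$ yields $\gamma_t^*\omega_t = \omega_0$, as desired. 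The serious part of the argument is the smoothness of the primitive $\alpha_t$ in $t$; the integration step and the Cartan identity are formally identical to the $b$-symplectic case treated by Scott, and no new subtleties enter from the star structure beyond what is already encoded in the cohomological splitting of Theorem \ref{bstarcohom}.
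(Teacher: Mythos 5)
Your proof is correct and is essentially the argument the paper itself intends: the paper omits the details of Proposition \ref{globalmoser}, deferring to the relative Moser argument of Proposition \ref{prop01} and to the global Moser theorems of Guillemin--Miranda--Pires and Scott, and the only content you add (a $t$-smooth choice of primitives $\alpha_t$ assembled from the splitting in the proof of Theorem \ref{bstarcohom}) is exactly the standard way to justify that step. The one caveat, which you state honestly, is that the flow of the $b$-vector field $v_t$ fixes the intersection points and maps each curve of $D$ to itself but is not in general the identity on $D$ pointwise (that would require the singular parts of $\dot\omega_t$, i.e.\ its residues along the curves, to vanish), so what is actually established is the ``leaves $D$ invariant'' form of the conclusion, as in the cited literature.
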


Because the proof of Proposition \ref{globalmoser} is very similar to the proof of Proposition \ref{prop01} and follows closely the proofs of Theorem 38 in \cite{Guillemin01} or Theorem 6.5 in \cite{Scott}, we will omit the details. This log version of the global Moser theorem completes the proof of Theorem \ref{thmclass} and gives us a classification of star log symplectic surfaces by $b$-de Rham cohomology classes.

\section{\bf Poisson Cohomology}\label{section4}  
 To compute the Poisson cohomology of star $\log$ symplectic surfaces, we construct a Lie algebroid that is isomorphic to the Poisson Lie algebroid of $\pi$.    

\begin{definition} Let $(S,D,\omega)$ be a star $\log$ symplectic surface of the $b$-tangent bundle $({}^bTS,\rho:{}^bTS\to TS)$ over $(S,D)$. The \emph{rigged Lie algebroid of $\omega$} is the vector bundle whose space of sections is $$\left\{u\in\mathcal{C}^\infty(S;{}^bTS)|i_u\omega\in\rho^*(T^*S)\right\},$$ i.e. the $b$-vector fields that when contracted into $\omega$ ``smoothen" it into a smooth one form. Alternatively, the sections of the dual rigged bundle  $\Gamma(\mathcal{R}^*)$ are an extension to $S$ of the image $\omega^\flat(\Gamma(TS))$ away from $D$.\end{definition} 

As discussed in \cite{LaniusPartitionable}, the rigged Lie algebroid is isomorphic to the Poisson Lie algebroid $T^*S$ with anchor map $\pi^\sharp=(\omega^\flat)^{-1}$. However, completing the computation of cohomology using the Lie algebroid cohomology of the rigged Lie algebroid is much more tractable because it is a complex of forms rather than multi-vectorfields.  

\subsection{\bf Identifying the rigged Lie algebroid} 

Next, we show that the rigged Lie algebroid of a star log Poisson surface is in fact a familiar Lie algebroid, called the zero tangent bundle. 

\begin{lemma} The Poisson cohomology of a star log Poisson surface $(S,D,\pi)$ is isomorphic to the de Rham cohomology ${}^0H^\ast(S)$ of the zero tangent bundle of $(S,D)$. \end{lemma}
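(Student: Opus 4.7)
The plan is to prove the stronger statement that the rigged Lie algebroid $\mathcal{R}$ of $\omega$ coincides, as a Lie algebroid over $S$, with the zero tangent bundle ${}^0TS$ of $(S,D)$. Combined with the already-cited Lie algebroid isomorphism $\mathcal{R}\cong (T^*S,\pi^\sharp)$ from \cite{LaniusPartitionable}, this will yield
\[
H^*_\pi(S) \;=\; H^*(T^*S,\pi^\sharp) \;\cong\; H^*(\mathcal{R}) \;\cong\; {}^0H^*(S),
\]
which is the desired conclusion. Hence the work reduces to the Lie algebroid identification $\mathcal{R}={}^0TS$.

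This identification is a local problem, which I plan to settle chart by chart in the astral atlas from Section \ref{secat}. Away from $D$ both sheaves are simply $TS$, so it suffices to work in a star chart $(U,(x,y))$ centered at a point $p\in D$ of degree $k\geq 1$. Proposition \ref{prop01} lets me assume $\omega$ is in the local normal form \eqref{eq:normal}, i.e.\ $\omega = g(x,y)\,dx\wedge dy$ with
\[
g \;=\; \frac{1}{xy\prod_{i=3}^{k}(A_ix+B_iy)} + P.
\]
A summand-by-summand inspection of $P$ then shows that $g$ has a simple pole along each line $\{x=0\}$, $\{y=0\}$, $\{A_ix+B_iy=0\}$ and is smooth and non-vanishing elsewhere on $U$.

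The key computation is now direct: for a smooth vector field $u=a\partial_x+b\partial_y$ on $U$,
\[
i_u\omega \;=\; g\,(a\,dy - b\,dx),
\]
which lies in $\rho^*(T^*S)$, i.e.\ is smooth, if and only if both $ga$ and $gb$ are smooth on $U$. Because $g$ has only simple poles along $D\cap U$, this is equivalent to $a$ and $b$ each being divisible by the product $xy\prod_{i=3}^k(A_ix+B_iy)$, i.e.\ to $u$ vanishing identically on $D\cap U$. Any such $u$ is automatically tangent to every curve of $D\cap U$, so it lies in $^bTU$ for free, and the $b$-tangency condition from the definition of $\mathcal{R}$ imposes no extra constraint. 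Thus $\Gamma(\mathcal{R}|_U)=\Gamma({}^0TU)$; the anchor of each bundle is the inclusion into $TS$ and the bracket in each case is the restriction of the bracket on $TS$, so the Lie algebroid structures agree. These identifications glue because both $\mathcal{R}$ and ${}^0TS$ are intrinsically defined subsheaves of $TS$, depending only on $\omega$ and $D$ respectively.

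The main obstacle I anticipate is verifying the simple-pole claim for $g$: the normal form carries the auxiliary partial-fraction terms in $P$, and I must confirm that no summand contributes a pole of order higher than one along any constituent line. A term like $\lambda_i/(x(A_ix+B_iy))$ contributes simple poles along $\{x=0\}$ and $\{A_ix+B_iy=0\}$, and analogous checks handle the other summands, so the verification is routine. Once the pole structure is in hand the divisibility characterization of $\mathcal{R}$ is automatic and the lemma follows.
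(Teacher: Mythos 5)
Your proposal is correct and follows essentially the same route as the paper: invoke the fact that the rigged Lie algebroid computes Poisson cohomology, then use the local normal form of Proposition \ref{prop01} to identify the rigged algebroid with the zero tangent bundle. The paper simply asserts the local generators $xy\prod_{i=3}^k(A_ix+B_iy)\,\partial_x$ and $xy\prod_{i=3}^k(A_ix+B_iy)\,\partial_y$, whereas you spell out the underlying divisibility computation showing $i_u\omega$ is smooth exactly when $u$ vanishes on $D$, which is the same argument made explicit.
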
 

\begin{proof} The proof that the rigged Lie algebroid computes Poisson cohomology can be found in Section 5 of \cite{Lanius}. Thus it suffices to show that the rigged Lie algebroid is isomorphic to the zero-tangent bundle. 

Recall that in a neighborhood of an intersection point $p$ of degree $k\geq 3$, $\omega$ can be expressed as $$\omega_0=  \left(\dfrac{1}{xy}\prod_{i=3}^k\dfrac{1}{(A_ix+B_iy)}+P\right) dx\wedge dy$$ where $$P=\sum_{i}\dfrac{\lambda_i}{\text{\scalebox{.9}{$x(A_ix+B_iy)$}}}+\dfrac{\delta_i}{\text{\scalebox{.9}{$y(A_ix+B_iy)$}}}+\sum_{i<j}\dfrac{\nu_{ij}}{\text{\scalebox{.9}{$(A_ix+B_iy)(A_jx+B_jy)$}}}$$ for real numbers $\lambda_i, \delta_i, \nu_{ij}$. Then the space $$\left\{u\in\mathcal{C}^\infty(S;{}^bTS)|i_u\omega\in\rho^*(T^*S)\right\}$$ is locally generated by $$xy\prod_{i=3}^k(A_ix+B_iy)\dfrac{\partial}{\partial x},\quad xy\prod_{i=3}^k(A_ix+B_iy)\dfrac{\partial}{\partial y}.$$ 

These are local generators of the zero tangent bundle, that is the vector bundle whose space of sections are $$\left\{u\in\mathcal{C}^\infty(S;TS)|u|_Z=0 \text{ for all }Z\in D\right\}.$$ This vector bundle, first introduced by Rafe Mazzeo and Richard Melrose in the context of manifolds with boundary \cite{Mazzeo, MazzeoMelrose}, is a Lie algebroid. The anchor map is evaluation in the tangent bundle and the Lie bracket is induced by the standard Lie bracket on $TS$. \end{proof}

\subsection{\bf $0$-de Rham cohomology} 

Given a surface $S$ with a collection $D$ of transverse curves $\left\{Z_1,\dots,Z_k\right\}$, the de Rham cohomology of the associated $0$-tangent bundle is  \begin{equation}\label{eq:eq10}\text{\scalebox{.8}{$\displaystyle{H^p(S)\oplus \text{\scalebox{.8}{$\bigoplus_{i}$}}H^{p-1}(Z_i)\oplus\text{\scalebox{.8}{$\bigoplus_{i<j}$}}\left(H^{p-2}\left(Z_i\cap Z_j)\oplus H^{p-2}(Z_i\cap Z_j;|N^*Z_i|^{-1}\otimes|N^*Z_j|^{-1}\right)\right)}$ }}\end{equation} which we originally computed in \cite{LaniusPartitionable}. Unlike in the case of the $b$-tangent bundle, this decomposition does not hold in the more general case when $D$ is any star divisor. In other words, the $0$-de Rham cohomology perceives higher order intersection of curves in $D$ while $b$-de Rham does not.   

\begin{theorem} \label{0starcohom}
Let $(S,D)$ be a surface $S$ with ordered star divisor $$D=\left\{Z_1,\dots,Z_k\right\}.$$ Given a star atlas with respect to this ordering, the Lie algebroid cohomology of the 0-tangent bundle over $(S,D)$ is isomorphic to the expression (\ref{eq:eq10}) in degree 0 and 1. In degree 2, the cohomology is a direct sum of the following vector spaces: 

\begin{itemize}
    \item A single copy of $H^2(S)$.  \vspace{1ex}
    
    \item Each hypersurfaces $Z_i$ contributes $H^1(Z_i)$. \vspace{1ex}
    
    \item Each pair wise intersection of two hypersurfaces $Z_i,Z_j$ with $i<j$ contributes \vspace{1ex}
    
     \centerline{\scalebox{.9}{$\displaystyle{H^{0}\left(Z_i\cap Z_j)\oplus H^{0}(Z_i\cap Z_j;|N^*Z_i|^{-1}\otimes|N^*Z_j|^{-1}\right).}$}}\vspace{1ex} 
    
    \item Each intersection of three hypersurfaces $Z_i,Z_j,Z_k$ with $i<j<k$ contributes \vspace{1ex}
    
    \centerline{\scalebox{.875}{$\displaystyle{H^{0}(Z_i\cap Z_j\cap Z_k;|N^*Z_i|^{-1}\otimes|N^*Z_j|^{-1}\otimes|N^*Z_k|^{-1})\oplus}$}} \vspace{1ex}
    
\centerline{\scalebox{.875}{$\displaystyle{H^{0}(Z_i\cap Z_j\cap Z_k;|N^*Z_j|^{-1}\otimes|N^*Z_k|^{-1})\oplus H^{0}(Z_i\cap Z_j\cap Z_k;|N^*Z_i|^{-1}\otimes|N^*Z_k|^{-1})}.$}} \vspace{1ex}

\item Each intersection of four or more hypersurfaces $Z_{i_1},Z_{i_2},Z_{i_3},\dots,Z_{i_\ell}$ with $i_1<i_2<i_3<\dots <i_\ell$ contributes \vspace{1ex}

\centerline{\scalebox{.9}{$\displaystyle{H^{0}(\text{\scalebox{.7}{$\bigcap_i$}} Z_i;\text{\scalebox{.7}{$\bigotimes_i$}}|N^*Z_i|^{-1})\oplus H^{0}(\text{\scalebox{.7}{$\bigcap_i$}} Z_i;\text{\scalebox{.7}{$\bigotimes_{i\neq i_1}$}}|N^*Z_i|^{-1})}\oplus$}}\vspace{1ex}

\centerline{\scalebox{.9}{$\displaystyle{  H^{0}(\text{\scalebox{.7}{$\bigcap_i$}} Z_i;\text{\scalebox{.7}{$\bigotimes_{i\neq i_2}$}}|N^*Z_i|^{-1})\oplus H^{0}(\text{\scalebox{.7}{$\bigcap_i$}} Z_i;\text{\scalebox{.7}{$\bigotimes_{i\neq i_1,i_2}$}}|N^*Z_i|^{-1})}$.}}

\end{itemize}

\vspace{1ex}

\end{theorem}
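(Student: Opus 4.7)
The plan is to adapt the filtered spectral sequence argument used in the proof of Theorem \ref{bstarcohom}. I would define a filtration $\mathscr{F}_k^* \subseteq {}^0\Omega^*(S,D)$ where $\mathscr{F}_k^*$ consists of $0$-forms whose singular support lies in at most $k$ curves of $D$, obtain short exact sequences
\[
0 \to \mathscr{F}_{k-1}^* \to \mathscr{F}_k^* \to E_0^{*,k} \to 0,
\]
and, as in the $b$-case, separate each form into ``regular'' and ``singular'' parts to split these as complexes. This reduces the computation to identifying each $H^p(E_0^{*,k})$, and the claimed decomposition follows by summing the contributions.

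The step $k=1$ parallels the corresponding step for ${}^bH^*$: near a smooth point of $Z_i$ the $0$-cotangent differs from the $b$-cotangent only by factors that are smooth along $Z_i$, so an almost identical Poincaré-type computation yields the summand $\bigoplus_i H^{p-1}(Z_i)$, with invariance under change of $Z_i$ defining function handled as in Example 2.11 of \cite{Lanius}. The step $k=2$ introduces the first new feature: near a double intersection of $Z_i$ and $Z_j$ there are now two classes of maximally singular $2$-forms, one with coefficient of $\frac{dx_{ij} \wedge dy_{ij}}{x_{ij} y_{ij}}$ and one with coefficient of $\frac{dx_{ij} \wedge dy_{ij}}{x_{ij}^2 y_{ij}^2}$. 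Under change of defining functions the second coefficient transforms by a Jacobian, producing sections of $|N^*Z_i|^{-1}\otimes|N^*Z_j|^{-1}$, exactly as recorded in \cite{LaniusPartitionable}. This yields the pair $H^{p-2}(Z_i \cap Z_j) \oplus H^{p-2}(Z_i \cap Z_j;|N^*Z_i|^{-1}\otimes|N^*Z_j|^{-1})$.

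The main obstacle is the analysis for $k \geq 3$. In the $b$-setting, Lemma \ref{bstarcohom} showed $H^*(E_0^{*,k}) = 0$ because the unique maximally singular $1$-form mapped nontrivially (by a factor $k-2$) onto the unique maximally singular $2$-form, so both kernel and cokernel vanished. In the $0$-setting the space of maximally singular forms at a degree $k$ intersection is much larger: 2-form coefficients range over monomials of the form $x^a y^b \prod_{i=3}^k (A_i x + B_i y)^{c_i}$ divided by the full product $x^2 y^2 \prod (A_i x + B_i y)^2$, and one must diagonalize $d$ on this finite-dimensional family. I expect the analysis to produce three twisted $H^0$ summands at triple intersections and four at intersections of degree $\ge 4$, with the jump from three to four arising because when $k=3$ the three linear forms $x$, $y$, $A_3x + B_3y$ satisfy a linear dependence in the dual of $\mathbb{R}^2$ that kills one summand, a dependency absent for $k \ge 4$.

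After identifying the local contributions, I would verify their invariance under change of star chart using the transition function properties recorded in Section \ref{startfcns} (in particular the proportionality $f|_\ell = c\, g|_\ell$ along each line of $D$), and glue the local groups into the global twisted cohomology $H^0(\bigcap Z_i; \bigotimes |N^*Z_i|^{-1})$ etc., following the pattern of the $b$-case. The preserved orientation/frame data of the astral atlas guarantees that the density bundle twistings are globally well-defined. This gluing, together with the $k=1$ and $k=2$ computations, assembles into the stated direct sum and completes the proof.
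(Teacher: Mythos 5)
Your scaffolding — the filtration by the number of curves, the short exact sequences, the regular/singular splitting, and the treatment of the $k=1$ and $k=2$ levels (including the appearance of the density twist $|N^*Z_i|^{-1}\otimes|N^*Z_j|^{-1}$ from the $\tfrac{dx\wedge dy}{x^2y^2}$-type classes) — matches the paper's proof. But the part of the theorem that is genuinely new, the computation of $H^*(E_0^{*,k})$ for $k\geq 3$, is not carried out: you only assert ``I expect the analysis to produce three \ldots and four \ldots,'' which is restating the conclusion rather than proving it. Worse, the one mechanism you offer for the jump from three to four summands — that for $k=3$ the forms $x$, $y$, $A_3x+B_3y$ satisfy a linear dependence ``absent for $k\geq 4$'' — cannot be right as stated: any three linear forms on $\mathbb{R}^2$ are linearly dependent, so this dependence is present for every $k\geq 3$ and distinguishes nothing.

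The actual computation in the paper is concrete and is where the dichotomy is decided. Writing $R=\prod_{i=3}^k(A_ix+B_iy)$, the top graded piece at a degree-$k$ point is spanned in degree $1$ by $\tfrac{dx}{xyR}\,\alpha+\tfrac{dy}{xyR}\,\beta$ (constants $\alpha,\beta$) and in degree $2$ by $\tfrac{dx\wedge dy}{x^2y^2R^2}(\alpha+\beta x+\gamma y+\delta xy)$; one must first show the degree-$1$ generators contribute no cohomology (your proposal never addresses this, although it is exactly what is needed for the claim that the degree-$0$ and degree-$1$ cohomology reduce to (\ref{eq:eq10})), and then decide which degree-$2$ classes are exact. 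The differential of the degree-$1$ generators produces, besides $\tfrac{dx\wedge dy}{xy^2R}\alpha-\tfrac{dx\wedge dy}{x^2yR}\beta$, the term $\tfrac{dx\wedge dy}{xyR^2}\big((\partial_yR)\alpha-(\partial_xR)\beta\big)$, and the question of whether the ``$\delta xy$'' class is hit reduces to solving $\delta=(\partial_yR)\alpha-(\partial_xR)\beta$ at the point $p$. This is possible exactly when $R$ is a single linear factor, i.e.\ $k=3$; for $k\geq 4$ the differential $dR$ vanishes at $p$ and the fourth class survives. This bookkeeping also requires knowing precisely which numerator terms drop into the lower filtration level $\mathscr{F}_{k-1}$ (e.g.\ anything divisible by $x^2$ or $y^2$), which your description of the quotient as ``all monomials over the full product'' glosses over. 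Without this analysis — the vanishing in degree $1$, the identification of the quotient, and the $k=3$ versus $k\geq 4$ exactness criterion — the proposal does not establish the theorem.
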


\begin{Example}  
Consider the star log symplectic structure on the sphere introduced in Example \ref{motivatingexample}. The theorem tells us that in fixed coordinates, we can identify $$H_\pi^2(\mathbb{S}^2)\simeq \mathbb{R}^{22}.$$

\end{Example}

Further, in the following example we demonstrate how many of the classes in $H_\pi^2(S)$ do not arise as bi-vectors on the $b$-tangent bundle. 

\begin{Example}\label{NonbClass} Consider $\mathbb{R}^2$ with standard coordinates equipped with Poisson bi-vector $$\pi=(x+y)(x-y)x\dfrac{\partial}{\partial x}\wedge\dfrac{\partial}{\partial y}.$$ The degeneracy locus of $\pi$ is depicted in Figure \ref{fig1}. As described in Theorem \ref{PC}, the second Poisson cohomology $H^2_\pi(\mathbb{R}^2)$ contains three copies of $$H^0(\left\{x=0\right\}\cap \left\{x-y=0\right\}\cap\left\{x+y=0\right\}).$$ One of these vector spaces is generated by the bi-vector $$\nu=x\dfrac{\partial}{\partial x}\wedge\dfrac{\partial}{\partial y}.$$ Since $\nu$ does not vanish at the lines $x+y=0$ or $x-y=0$, it does not correspond to an element of the $b$-de Rham sub-complex. Further, its corresponding element in the rigged algebroid $\mathcal{R}$ is not cohomologous to any $b$-de Rham form. Thus not all non-trivial deformations of $\pi$ near $\pi$ can be done through a path of $\log$-symplectic structures.  \end{Example}

\begin{proof}

As in the proof of Theorem \ref{bstarcohom}, we will proceed using a spectral sequence. 

\noindent {\bf Constructing a filtration ${}^0\mathcal{F}$.} 

Let $(S,D)$ be a surface $S$ with star divisor $D$.  Given a subset $I\subseteq D$, let ${}^0\Omega^\ast(S,I)$ denote the complex of 0-de Rham forms of the 0-tangent bundle over $(S,I)$. In this notation, we want to compute the cohomology of the complex ${}^0\Omega^\ast(S,D)$. We filter the differential complex $({}^0\Omega^\ast(S),d)$ using a filtration denoted  $${}^0\mathcal{F}=\left\{\Omega^\ast(S)={}^0\mathscr{F}^\ast_0\subseteq {}^0\mathscr{F}^\ast_1\subseteq {}^0\mathscr{F}^\ast_2\subseteq\dots\subseteq {}^0\mathscr{F}^\ast_{|D|-1}\subseteq {}^0\mathscr{F}^\ast_{|D|}={}^0\Omega^\ast(S,D)\right\}$$ where $${}^0\mathscr{F}^*_k:={}^{v.s.}\text{Span}\left\{\bigcup_{\text{\scalebox{.8}{$I\subseteq D,|I|=k$}}}\hspace{-1ex}{}^0\Omega^\ast(S,I)\right\}.$$Notice that ${}^0\mathscr{F}^\ast_k\subseteq {}^0\mathscr{F}^\ast_{k+1}$ for all $k$. Each set ${}^0\mathscr{F}^p_k$ is a group under addition and has the structure of a module with scalars from the ring $${}^0\mathscr{F}^{0}_k=\mathcal{C}^\infty(S).$$ 

Note that ${}^0\Omega^\ast(S,I)\subseteq {}^0\Omega^\ast(S,D)$ for all $I\subseteq D$. Consequently, the modules ${}^0\mathscr{F}_k^\ast\subseteq {}^b\Omega^\ast(S,D)$ for all $k$. It is easy to check that ${}^0\mathscr{F}_k^\ast$ is closed under the differential of ${}^0\Omega^\ast(S,D)$. Thus we can define a differential ${}^0\mathscr{F}_k^p\xrightarrow{~d~}{}^0\mathscr{F}_k^{p+1}$ inherited from the differential on ${}^0\Omega^\ast(S,D)$. 

For each $k$, we have a chain complex: $$0\to {}^0\mathscr{F}^0_k\xrightarrow{~d~}{}^0\mathscr{F}^1_k\xrightarrow{~d~}{}^0\mathscr{F}^2_k\to 0.$$

\noindent {\bf The associated graded complex of ${}^0\mathcal{F}$.} 

Next, we will consider the associated graded complex of our filtered complex $({}^0\Omega^\ast(S),d,{}^0\mathcal{F})$. We denote the associated graded module $({}^0\mathscr{F}^p_{k-1})/({}^0\mathscr{F}^p_k)$ by $E^{p,k}_0$. 

The inclusions ${}^0\mathscr{F}^\ast_{k-1}\to {}^0\mathscr{F}^\ast_{k}$ of cochain complexes from filtration ${}^0\mathcal{F}$ fit into a short exact sequence of complexes \begin{center} $0\to {}^0\mathscr{F}^\ast_{k-1}\to {}^0\mathscr{F}^\ast_{k}\to E^{\ast,k}_0\to 0$. \end{center} The differential $^{E}d$ on $E^{p,k}_0$ is induced by $d$ on ${}^0\mathscr{F}^p_{k}$: if $P$ is the projection ${}^0\mathscr{F}^p_k\to{}^0\mathscr{F}^p_k/{}^0\mathscr{F}^p_{k-1},$ then  $^{E}d(\eta)=P(d(\theta))$ where  $\theta\in{}^0\mathscr{F}^p_k$ is any form such that $P(\theta)=\eta$. Hence $({}^Ed)^2=0$ and $(E^{\ast,k}_0,{}^Ed)$ is a complex. \vspace{2ex}

The associated graded complex of ${}^0\Omega^\ast(S),d)$ is $\bigoplus_{k=1}^m E^{\ast,k}_0$. The following computation will show that the spectral sequence collapses at the first page and $$\bigoplus_{k=1}^m H^d(E^{\ast,k}_0)\simeq {}^0H^d(S).$$ \vspace{2ex}

\noindent {\bf Computing $H^p({}^0\mathscr{F}^\ast_1)$.}

Let $k=1$ and consider the short exact sequence $$0\to {}^0\mathscr{F}^\ast_{0}\to {}^0\mathscr{F}^\ast_{1}\to E^{\ast,1}_0\to 0$$ defined above. Note that $\mathscr{F}^\ast_0=\Omega^\ast(S)$. 

Let $D=\left\{Z_1,Z_2,\dots,Z_\ell\right\}$ be our star divisor and consider defining functions $x_1,x_2,\dots,x_\ell$ such that $Z_i=\left\{x_i=0\right\}$. For each $i$, let $\chi_{i}:S\to\mathbb{R}$ be a smooth bump function  supported in a neighborhood of $Z_i$ and with constant value one near $Z_i$. An element $\mu \in \mathscr{F}_1^1$ can be expressed as $$\mu=\sum_{i=1}^\ell \chi_i\cdot\left(\dfrac{dx_i}{x_i}\alpha_i+\dfrac{\beta_i}{x_i}\right)+\theta$$ where $\alpha_i\in\mathcal{C}^\infty(Z_i)$, $\beta_i\in\Omega^1(Z_i)$, and $\theta\in\Omega^1(S)$.

Note that $$d\mu=-\sum_{i=1}^\ell \chi_i\left(\dfrac{dx_i}{x_i}\wedge d\alpha_i-\dfrac{dx_i\wedge\beta_i}{x_i^2}\right)+d\chi_i\wedge\left(\dfrac{dx_i}{x_i}\alpha_i+\dfrac{\beta_i}{x_i}\right) +\dfrac{d\beta_i}{x_i}+d\theta.$$ 

An element $\mu \in \mathscr{F}_1^2$ can be expressed as $$\mu=\sum_{i=1}^\ell \chi_i\cdot \dfrac{dx_i}{x_i^2}\wedge(\alpha_{i}+\beta_{i}x_i)+\theta$$ where $\alpha_i,\beta_i\in\Omega^1(Z_i)$ and $\theta\in\Omega^2(S)$. Note that $d\mu=0$. 

Given $\mu\in \mathscr{F}_1^p$, we write $\mathfrak{R}(\mu)=\theta$ and $\mathfrak{S}(\mu)=\mu-\mathfrak{R}(\mu)$ for `regular' and `singular' parts. It is easy to see that $\mathfrak{R}(d\mu)=d(\mathfrak{R}(\mu))$ and $\mathfrak{S}(d\mu)=d(\mathfrak{S}(\mu))$. Thus we have a  splitting $${}^0\mathscr{F}^\ast_1=\Omega^*(S)\oplus E^{1,*}_0$$ as complexes and $H^p({}^0\mathscr{F}^\ast_1)\simeq H^p(S)\oplus  H^p(E^{1,*}_0)$. We are left to compute the cohomology group of the quotient complex $E^{1,*}_0$. 

Any $P(\mu)\in E_0^{\ast,1}$ is of the form $$P(\mu)=\sum_{i=1}^\ell \dfrac{dx_i}{x_i}\alpha_i+\dfrac{\beta_i}{x_i}$$ for $\alpha_i\in\mathcal{C}^\infty(Z_i)$, $\beta_i\in\Omega^1(Z_i)$. Then $$dP(\mu)=-\sum_{i=1}^\ell \dfrac{dx_i}{x_i}\wedge d\alpha_i-\dfrac{dx_i\wedge\beta_i}{x_i^2}+\dfrac{d\beta_i}{x_i}.$$ 
By inspecting the expression at each $Z_i$, this gives us kernel relations $d\alpha_i=0$, $\beta_i=0$. Given these choices of $Z_i$ defining functions $x_1,\dots,x_\ell$, we identify the $\ker (d: E_0^{p,1} \to E_0^{p+1,1})$ as $$\bigoplus_{i=1}^\ell \left\{\alpha_i\in\mathcal{C}^\infty(Z_i):d\alpha_i=0\right\}.$$  Thus $$H^1(E_0^{\ast,1})\simeq \bigoplus_{i} H^0(Z_i).$$

For $H^2(E_0^{\ast,1})$, note that given $\mu\in{}^0\mathscr{F}^2_1$ there exists $\tilde{\mu}\in{}^0\mathscr{F}^1_1$ such that $$P(\mu-d\tilde{\mu})=\sum_i\dfrac{dx_i}{x_i}\wedge(\beta_i-d\alpha_i)$$ for $\beta_i\in\Omega^1(Z_i)$, $\alpha_i\in\Omega^0(Z_i)$, and that elements of this form are not in the image of $d$. Thus we can identify $$H^2(E_0^{\ast,1})\simeq \bigoplus_i H^1(Z_i).$$ 

By a computation, which can be found in example 2.11 of \cite{Lanius}, these sets are invariant under change of $Z_i$ defining function. Thus $$H^p(E_0^{\ast,1})\simeq \bigoplus_i H^{p-1}(Z_i)\text{ and }H^p({}^0\mathscr{F}^\ast_1)\simeq H^p(S)\oplus\bigoplus_i H^{p-1}(Z_i).$$

\vspace{2ex}

\noindent {\bf Computing $H^p({}^0\mathscr{F}^\ast_2)$.} 

Let $k=2$ and consider the short exact sequence $$0\to {}^0\mathscr{F}^\ast_{1}\to {}^0\mathscr{F}^\ast_{2}\to E_0^{\ast,2}\to 0.$$

Let $D=\left\{Z_1,Z_2,\dots,Z_\ell\right\}$ be our star divisor. For each pair of curves $Z_i, Z_j$ with $i<j$, there is a pair of tubular neighborhoods $\tau_i=Z_i\times(-\varepsilon, \varepsilon)_{x_{ij}}$ of $Z_i$ and $\tau_j=Z_j\times(-\varepsilon, \varepsilon)_{y_{ij}}$ of $Z_j$ such that $$\left[\dfrac{\partial}{\partial x_{ij}},\dfrac{\partial}{\partial y_{ij}}\right]=0.$$ For existence of such neighborhoods see, for instance, section 5 of \cite{Albin}.  

Let $\chi_{ij}$ be a smooth bump function supported near $Z_i\cup Z_j$ and with constant value one near $Z_i\cup Z_j$. An element $\mu\in {}^0\mathscr{F}^1_2$ can be expressed as $$\mu = \sum_{i<j}\chi_{ij}\cdot\left(\dfrac{dx_{ij}}{x_{ij}y_{ij}}\alpha_{ij}+\dfrac{dy_{ij}}{x_{ij}y_{ij}}\beta_{ij}\right) +\theta $$ for $\alpha_{ij},\beta_{ij}\in\mathcal{C}^\infty(Z_i\cap Z_j)$, and $\theta\in{}^0\mathscr{F}^1_1$. Note that $\alpha$ and $\beta$ are just constants.  Thus $$d\mu = \sum_{i<j}\dfrac{dx_{ij}\wedge dy_{ij}}{x_{ij}y^2_{ij}}\alpha_{ij}-\dfrac{dx_{ij}\wedge dy_{ij}}{x^2_{ij}y_{ij}}\beta_{ij} +d\theta.$$

An element $\mu\in {}^0\mathscr{F}^2_2$ can be expressed as $$\mu = \sum_{i<j}\dfrac{dx_{ij}\wedge dy_{ij}}{x^2_{ij}y^2_{ij}}(\alpha_{ij}+\beta_{ij}x_{ij}+\gamma_{ij}y_{ij}+\delta_{ij}x_{ij}y_{ij}) +\theta $$ for $\alpha_{ij},\beta_{ij},\gamma_{ij},\delta_{ij}\in\mathcal{C}^\infty(Z_i\cap Z_j)$, and $\theta\in{}^0\mathscr{F}^2_1$. Note $d\mu=0$. 

Given $\mu\in \mathscr{F}_2^p$, we write $\mathfrak{R}(\mu)=\theta$ and $\mathfrak{S}(\mu)=\mu-\mathfrak{R}(\mu)$ for `regular' and `singular' parts. It is easy to see that $\mathfrak{R}(d\mu)=d(\mathfrak{R}(\mu))$ and $\mathfrak{S}(d\mu)=d(\mathfrak{S}(\mu))$. Thus we have a  splitting $${}^0\mathscr{F}^\ast_2={}^0\mathscr{F}^\ast_1\oplus E_0^{\ast,2}$$ as complexes and $H^p({}^0\mathscr{F}^\ast_2)\simeq H^p({}^0\mathscr{F}^\ast_1)\oplus  H^p(E_0^{\ast,2})$. We are left to compute the cohomology group of the quotient complex $E_0^{\ast,2}$. 

Any $P(\mu)\in E_0^{\ast,2}$ is of the form
$$P(\mu)=\sum_{i<j}\dfrac{dx_{ij}}{x_{ij}y_{ij}}\alpha_{ij}+\dfrac{dy_{ij}}{x_{ij}y_{ij}}\beta_{ij}$$ for $\alpha_{ij},\beta_{ij}\in\mathcal{C}^\infty(Z_i\cap Z_j)$. Then $$dP(\mu)=\sum_{i<j}\dfrac{dx_{ij}\wedge dy_{ij}}{x_{ij}y^2_{ij}}\alpha_{ij}-\dfrac{dx_{ij}\wedge dy_{ij}}{x^2_{ij}y_{ij}}\beta_{ij}.$$ Thus our kernel relations are $\alpha_{ij}=0$ and $\beta_{ij}=0$. 

For $H^2(E_0^{\ast,2})$, note that for $\mu\in {}^0\mathscr{F}^2_2$ there exists $\tilde{\mu}\in{}^0\mathscr{F}^1_2$ such that $$P(\mu-d\tilde{\mu})=\dfrac{dx_{ij}\wedge dy_{ij}}{x^2_{ij}y^2_{ij}}\wedge(\alpha_{ij} + \delta_{ij}x_{ij}y_{ij})$$ for $\alpha_{ij},\delta_{ij}\in\mathcal{C}^\infty(Z_i\cap Z_j)$. Further, there are no elements of this form in the image of $d$. Thus, given these choices $x_{ij},y_{ij}$ of defining functions, we can identify $$H^2(E_0^{\ast,2})\simeq \bigoplus_{i<j} H^0(Z_i\cap Z_j)\oplus H^0(Z_i\cap Z_j).$$ 

By a computation which can be found at the conclusion of the proof of Theorem 2.15 in \cite{Lanius}, this cohomology can be identified independently of defining functions as 
$$H^2(E_0^{\ast,2})\simeq \bigoplus_{i<j} H^0(Z_i\cap Z_j)\oplus H^0(Z_i\cap Z_j;|N^*Z_i|^{-1}\otimes |N^*Z_j|^{-1}).$$ Thus  $H^p({}^0\mathscr{F}^\ast_2)$ is $$ H^p(S)\oplus\bigoplus_i H^{p-1}(Z_i)\oplus  \bigoplus_{i<j} H^{p-2}(Z_i\cap Z_j)\oplus H^{p-2}(Z_i\cap Z_j;|N^*Z_i|^{-1}\otimes |N^*Z_j|^{-1}).$$

\vspace{1ex}

\noindent {\bf Computing $H^p({}^0\mathscr{F}^\ast_k)$ for $k\geq 3$.}

Fix $k\geq 3$ and consider the short exact sequence $$0\to {}^0\mathscr{F}^\ast_{k-1}\to {}^0\mathscr{F}^\ast_{k}\to E_0^{\ast,k}\to 0.$$

Given any $p\in S$ such that at least the $k$ curves in the set $I=\left\{Z_1,\dots,Z_k\right\}$ intersect at $p$, there exists a star chart with respect to $I$ centered at $p$, $(U_{p,I},x,y)$, such that $$Z_1=\left\{x=0\right\},Z_2=\left\{y=0\right\},\dots,Z_k=\left\{A_kx+B_ky=0\right\}.$$  

Let $\chi_{p,I}$ be a smooth bump function  supported in $U_{p,I}$ and with constant value one in a neighborhood of $p$. 

Let $$R:=\prod_{i=3}^k(A_ix+B_iy).$$ An element $\mu\in {}^0\mathscr{F}^1_k$ can be expressed as $$\mu = \sum_{\begin{array}{c} p, I\\ \scalebox{.8}{|I|=k}\end{array}}\chi_{p,I}\cdot\left(\dfrac{dx}{xyR}\alpha_{p,I}+\dfrac{dy}{xyR}\beta_{p,I}\right)+\theta $$ for $\alpha_{p,I},\beta_{p,I}\in\Omega^{0}(\left\{p\right\})$ (i.e. constants) and $\theta\in{}^0\mathscr{F}_{k-1}^{1}$. 
Note %\begin{equation}\label{eq:6} \begin{split} d\mu= \sum_{\begin{array}{c} p, I\\ \scalebox{.9}{|I|=k}\end{array}}\chi_{p,I}\cdot\bigg(&\dfrac{dx\wedge dy}{xy^2P}\alpha_{p,I}+\dfrac{dx\wedge dy}{xyP^2}(\partial_yP)\alpha_{p,I}\\ &-\dfrac{dx\wedge dy}{x^2yP}\beta_{p,I}-\dfrac{dx\wedge dy}{xyP^2}(\partial_xP)\beta_{p,I}\bigg)+d\theta \end{split}\end{equation}

\text{\scalebox{.8}{$ \displaystyle{d\mu= \sum_{\begin{array}{c} p, I \\ \scalebox{.9}{|I|=k}\end{array}}\chi_{p,I}\cdot\bigg(\dfrac{dx\wedge dy}{xy^2R}\alpha_{p,I}+\dfrac{dx\wedge dy}{xyR^2}(\partial_yR)\alpha_{p,I} -\dfrac{dx\wedge dy}{x^2yR}\beta_{p,I}-\dfrac{dx\wedge dy}{xyR^2}(\partial_xR)\beta_{p,I}\bigg)}$}} 

\hspace{5ex}\text{\scalebox{.8}{$\displaystyle{+\sum_{\begin{array}{c} p, I\\ \scalebox{.8}{|I|=k}\end{array}}d\chi_{p,I}\wedge\left(\dfrac{dx}{xyR}\alpha_{p,I}+\dfrac{dy}{xyR}\beta_{p,I}\right)+d\theta }$}}

Any element $\mu \in {}^0\mathscr{F}_k^2$ can be expressed as  $$\mu=\sum_{\begin{array}{c} p, I\\ \scalebox{.9}{|I|=k}\end{array}}\chi_{p,I}\cdot\dfrac{dx\wedge dy}{x^2y^2R^2}\wedge(\alpha_{p,I}+\beta_{p,I}x+\gamma_{p,I}y+\delta_{p,I}xy)+\theta$$ for $\alpha_{p,I},\beta_{p,I},\gamma_{p,I},\delta_{p,I}\in\Omega^{0}(\left\{p\right\})$ and $\theta\in{}^0\mathscr{F}_{k-1}^2$. Note $d\mu=0$. 

Given $\mu\in {}^0\mathscr{F}_k^\ell$, we write $\mathfrak{R}(\mu)=\theta$ and $\mathfrak{S}(\mu)=\mu-\mathfrak{R}(\mu)$ for `regular' and `singular' parts. One can check that $\mathfrak{R}(d\mu)=d(\mathfrak{R}(\mu))$ and $\mathfrak{S}(d\mu)=d(\mathfrak{S}(\mu))$. Thus we have a  splitting $${}^0\mathscr{F}^\ast_k={}^0\mathscr{F}^\ast_{k-1}\oplus E_0^{\ast,k}$$ as complexes and $H^p({}^0\mathscr{F}^\ast_k)\simeq H^p({}^0\mathscr{F}^\ast_{k-1})\oplus  H^p(E_0^{\ast,k})$. We are left to compute the cohomology group of the quotient complex $E_0^{\ast,k}$. 

We will first compute the cohomology at $E_0^{1,k}$. 
Any $P(\mu)\in E_0^{1,k}$ is of the form $$P(\mu)=\sum_{\begin{array}{c} p, I\\ \scalebox{.9}{|I|=k}\end{array}}\chi_{p,I}\cdot\left(\dfrac{dx}{xyR}\alpha_{p,I}+\dfrac{dy}{xyR}\beta_{p,I}\right)$$ for $\alpha_{p,I},\beta_{p,I}\in\Omega^{0}(\left\{p\right\})$. Then $dP(\mu)$ follows from our expression of $d\mu$ above. Thus we can identify the set $\ker (d:E_0^{1,k}\to E_0^{2,k})=\left\{\alpha_{p,I}=0, \beta_{p,I}=0\right\}$ and $H^1(E_0^{\ast,k})=0$ for all $k\geq 3$. \vspace{1ex}

Next, we compute the cohomology at $E_0^{2,k}$. Any element  $P(\mu)\in E_0^{2,k}$ is of the form $$P(\mu)=\sum_{\begin{array}{c} p, I\\ \scalebox{.9}{|I|=k}\end{array}}\chi_{p,I}\cdot\dfrac{dx\wedge dy}{x^2y^2R^2}\wedge(\alpha_{p,I}+\beta_{p,I}x+\gamma_{p,I}y+\delta_{p,I}xy)$$ for $\alpha_{p,I},\beta_{p,I},\gamma_{p,I},\delta_{p,I}\in\Omega^{0}(\left\{p\right\})$. We have $dP(\mu)=0$. Note that $d(E_0^{1,k})$ has empty intersection with elements of the form $$\dfrac{dx\wedge dy}{x^2y^2R^2}\wedge(\alpha_{p,I}+\beta_{p,I}x+\gamma_{p,I}y).$$

Using the expression of $d\mu$ for $\mu\in E_0^{1,k}$ given above, we are left to consider when does the following equality hold at the point $p$:  $$\delta_{p,I}=(\partial_yP)\alpha_{p,I}-(\partial_xP)\beta_{p,I}$$ for $\alpha_{p,I},\beta_{p,I},\delta_{p,I}\in\Omega^0(\left\{p\right\})$. For this equation to hold true, $P$ would need to be a single line $P=Ax+By$. Thus when $k\geq 4$, the term $$\sum_{\begin{array}{c} p, I\\ \scalebox{.9}{|I|=k}\end{array}}\chi_{p,I}\cdot\dfrac{dx\wedge dy}{x^2y^2R^2}\wedge\delta_{p,I}xy$$ is not in the image of $d(E_0^{1,k})$. 

Let us consider the case where $k=3$. In this instance, the numbers $\beta_{p,I}=0$ and $\alpha_{p,I}=\dfrac{\delta_{p,I}}{B}$ provide a solution to the equality and $$\sum_{\begin{array}{c} p, I\\ \scalebox{.9}{|I|=k}\end{array}}\chi_{p,I}\cdot\dfrac{dx\wedge dy}{x^2y^2R^2}\wedge\delta_{p,I}xy$$ is in the image of $d(E_0^{1,3})$. 

Thus, for these fixed $Z_i$ defining functions, we have identified
$$H^n(E_0^{\ast,3})\simeq \bigoplus_{p, |I|=3} \left(H^{n-2}(\left\{p\right\})\right)^3$$ 
 and, for $k\geq 4$, 
$$H^n(E_0^{\ast,k})\simeq \bigoplus_{p, |I|=k} \left(H^{n-2}(\left\{p\right\})\right)^4.$$ 

For completeness, we will consider what happens under change of defining functions. By a computation which can be found at the conclusion of the proof of theorem 2.15 in \cite{Lanius}, the cohomology $H^p(E_0^{\ast,k})$ can be identified independently of defining functions as \vspace{1ex}

\centerline{\scalebox{.9}{$\displaystyle{ \text{\scalebox{.9}{$\bigoplus_{\text{\scalebox{.8}{$\begin{array}{c} i\in J\\ |J|=3 \end{array}$}}}$}}\big(H^{p-2}(\text{\scalebox{.7}{$\bigcap_i$}} Z_j;\text{\scalebox{.7}{$\bigotimes_i$}}|N^*Z_i|^{-1})\oplus H^{p-2}(\text{\scalebox{.7}{$\bigcap_i$}} Z_i;\text{\scalebox{.7}{$\bigotimes_{i\in J\setminus \left\{i_1\right\}}$}}|N^*Z_i|^{-1})\oplus H^{p-2}(\text{\scalebox{.7}{$\bigcap_i$}} Z_i;\text{\scalebox{.7}{$\bigotimes_{i\in J\setminus \left\{i_2\right\}}$}}|N^*Z_i|^{-1})\big)}$}} 

\noindent when $k=3$ and as \vspace{1ex}

\centerline{\scalebox{.9}{$\displaystyle{ \text{\scalebox{.9}{$\bigoplus_{\text{\scalebox{.8}{$\begin{array}{c} i\in J\\ |J|=k \end{array}$}}}$}}\big(H^{p-2}(\text{\scalebox{.7}{$\bigcap_i$}} Z_j;\text{\scalebox{.7}{$\bigotimes_i$}}|N^*Z_i|^{-1})\oplus H^{p-2}(\text{\scalebox{.7}{$\bigcap_i$}} Z_i;\text{\scalebox{.7}{$\bigotimes_{i\in J\setminus \left\{i_1\right\}}$}}|N^*Z_i|^{-1})\big)}$}}

\centerline{\scalebox{.9}{$\displaystyle{\oplus \text{\scalebox{.9}{$\bigoplus_{\text{\scalebox{.8}{$\begin{array}{c} i\in J\\ |J|=k\end{array}$}}}$}} \big(H^{p-2}(\text{\scalebox{.7}{$\bigcap_i$}} Z_i;\text{\scalebox{.7}{$\bigotimes_{i\in J\setminus \left\{i_1\right\}}$}}|N^*Z_i|^{-1})\oplus H^{p-2}(\text{\scalebox{.7}{$\bigcap_i$}} Z_i;\text{\scalebox{.7}{$\bigotimes_{i\in J\setminus \left\{i_1,i_2\right\}}$}}|N^*Z_i|^{-1})\big)}$}}\vspace{1ex}

\noindent when $k\geq 4$, where $J$ is a subset of integers $\left\{1,\dots,k\right\}$ and $i_1,i_2$ are the smallest elements in $J$.

\end{proof}

\end{document}